\newtheorem{theorem}{Theorem}[section]
\newtheorem{lemma}[theorem]{Lemma}
\newtheorem{corollary}[theorem]{Corollary}
\newcounter{intro}
\newtheorem{questionx}{Question}
\newtheorem{introthm}[intro]{Theorem}
\theoremstyle{definition}
\newtheorem{example}[theorem]{Example}
\newtheorem{remark}[theorem]{Remark}
\newtheorem{chunk}[theorem]{}
\newtheorem{Notation}[theorem]{Notation}
\newtheorem{thm}{Theorem}[subsection]
\theoremstyle{definition}
\newtheorem{ex}[thm]{Example}
\newtheorem{ch}[thm]{}
\newtheorem{defn}[thm]{Definition}
\newcommand{\Kos}{\text{Kos}}
\newcommand{\les}{\leqslant}
\newcommand{\Spec}{{\operatorname{Spec}}}
\newcommand{\spec}{{\operatorname{Spec}^*}}
\newcommand{\Hom}{{\operatorname{Hom}}}
\newcommand{\HH}{\operatorname{HH}}
\newcommand{\Ext}{{\operatorname{Ext}}}
\newcommand{\Tor}{{\operatorname{Tor}}}
\newcommand{\RHom}{\operatorname{\mathsf{RHom}}}
\newcommand{\del}{\partial}
\renewcommand{\H}{\operatorname{H}}
\renewcommand{\S}{\mathcal{S}}
\newcommand{\supp}{{\operatorname{Supp}}}
\newcommand{\xra}{\xrightarrow}
\newcommand{\V}{{\rm{V}}}
\newcommand{\cV}{{\mathcal{V}}}
\newcommand{\ann}{{\operatorname{ann}}}
\newcommand{\lotimes}{\otimes^{\sf L}}
\newcommand{\m}{\mathfrak{m}}
\newcommand{\p}{\mathfrak{p}}
\newcommand{\q}{\mathfrak{q}}
\newcommand{\A}{\mathcal{A}}
\newcommand{\Z}{\mathbb{Z}}
\newcommand{\Id}{\text{Id}}
\title[Cohomological support varieties under local homomorphisms]{Cohomological support varieties under \\local homomorphisms}
\author[R.~Watson]{Ryan Watson}
\address{Department of Mathematics,
University of Nebraska, Lincoln, NE 68588, U.S.A.}
\email{rwatson9@huskers.unl.edu}
\keywords{cohomological support variety, complete intersection}
\subjclass[2020]{}
\begin{document}

\begin{abstract}
Given a bounded complex of finitely generated modules $M$ over a commutative noetherian local ring $R$, one assigns to it a variety, $\cV_R(M)$, called the cohomological support variety of $M$ over $R$. The variety $\cV_R(M)$ holds important homological information about the complex and the ring.  In this paper, we study the behavior of cohomological support varieties under restriction of scalars along local maps. In the case where the rings involved are complete intersections and the map is a surjective complete intersection, this recovers a theorem of Bergh and Jorgensen. Additionally, we show that if $R\to S$ is a local map of finite flat dimension, then the dimension of $\cV_R(R)$ is less than or equal to that of $\cV_S(S)$. This allows us to recover Avramov's result that the complete intersection property is preserved under localization.
\end{abstract}

\maketitle

\vspace{-.2in}
\section*{Introduction}\label{s_intro}

Inspired by geometric approaches to study modular representation theory \cite{Quillen:1971}, Avramov developed the theory of cohomological support varieties for local complete intersection rings in the 1980s \cite{avramov:1989}. Through the work in \cite{ Jorgensen:2002, Burke/Walker:2015, Avramov/Iyengar:2018, Pollitz:2019, Pollitz:2021} this theory has been extended to now include all noetherian local rings. Given a bounded complex of finitely generated modules, $M$, over a noetherian local ring $R$, one assigns a conical affine variety, $\cV_R(M)$, called the cohomological support variety of $M$ over $R$. The variety $\cV_R(M)$ holds important homological information about both $M$ and $R$. For example, $R$ is a complete intersection if and only if $\cV_R(M) = \{0\}$ for some finitely generated $R$-module $M$ \cite{Pollitz:2019}. This theory has been a valuable tool in commutative algebra and has led to many other remarkable results \cite{Avramov/Buchweitz:2000b, Avramov/Iyengar:2007, Bergh, Stevenson:2014a, Bergh/Joregensen:2015, Dao/Sanders:2017, Briggs/Grifo/Pollitz:2022, Liu/Pollitz:2025} (among others).

Throughout, $(R,\m_R,k)$ will denote a noetherian local ring with algebraically closed residue field $k$. Let $(Q,\m_Q,k)$ be a regular local ring such that the $\m_R$-adic completion of $R$, denoted by $\widehat{R}$, is isomorphic to $Q/I$ where $I$ is minimally generated by $f_1,\dotsc, f_n \in \m_Q^2$. Such a $Q$ exists by Cohen's structure theorem. Let $M$ be a bounded complex of finitely generated $R$-modules. The cohomological support variety of $M$ is a conical affine variety in $n$-dimensional affine space,
\[
\cV_R(M) \subseteq \mathbb A^n_k \cong I/\m_Q I.
\]

In this paper, we will often consider objects in the derived category of $R$, denoted $D(R)$ (see \ref{c_derived-cat}). We write $D^f(R)$ for the full subcategory of $D(R)$ consisting of objects with bounded and finitely generated homology. If $M$ and $N$ are two bounded complexes of finitely generated $R$-modules that are isomorphic in $D^f(R)$, then $\cV_R(M) = \cV_R(N)$. Thus, cohomological support varieties give a way to assign a conical affine variety to each object in $D^f(R)$. Cohomological support varieties have been used to classify complete intersections via the structure of $D^f(R)$ \cite{Pollitz:2019} as well as classify the thick subcategories of $D^f(R)$ when $R$ is a complete intersection \cite{Stevenson:2014a} (in the language of triangulated categories) and the thick subcategories of $D^f(E)$ where $E$ is the Koszul complex on $f_1, \dotsc, f_n$ \cite{Liu/Pollitz:2025}.\\

The work in this paper has been motivated by the following question:

\begin{questionx}\label{q_functoriality}
    Given noetherian local rings $R$ and $S$, and a functor 
    \[
    F\!:D^f(S) \to \! D^f(R),
    \]
    is there a relation between $\cV_R(F(M))$ and $\cV_S(M)$ for all $M$ in $D^f(S)$?
\end{questionx}

While there may not be much hope in providing a satisfying answer for an arbitrary functor $F$, there is reason to investigate this question for specific functors. The cohomological support variety of a complex $M$ contains homological information about the complex, and if one can understand how these varieties change under certain functors, one can understand how certain homological information is transferred through said functors. This question has already been examined in papers such as \cite{Bergh/Joregensen:2015}, \cite{Iyengar/Pollitz/Sanders:2022}, and \cite{Ballard/Iyengar/Lank/Mukhopadhyay/Pollitz:2025}. In the first paper, Bergh and Jorgensen consider this question when $R$ and $S$ are complete, local, complete intersections and the functor is restriction of scalars along a surjective ring map $R \to \!S$. The first main result of this paper generalizes a result of theirs to restriction of scalars along any local map of noetherian local rings. 

\begin{introthm}\label{theorem A} \looseness -1 
Let $R$ and $S$ be local rings with algebraically closed residue fields. Assume $\widehat R \cong Q/I$ and $\widehat S \cong Q'/J$ are minimal Cohen presentations. Let $\varphi \! : R \to S$ be a local homomorphism, and $\tilde \varphi\!:\!Q \to Q'$ a lift of the map $\varphi$. This induces a map $\cV_{\tilde \varphi}\!:\!I/\m_Q I \! \to \! J /\m_{Q'} J$. Then for any $M$ in $D^f(S)$ that is an element of $D^f(R)$ under restriction of scalars, 
\[
\cV_R(M) =  \cV_{\tilde \varphi}^{-1}(\cV_S(M)).
\] 
\end{introthm}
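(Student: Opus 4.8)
The plan is to reduce to complete local rings, compute $\cV_R(-)$ through the cohomology operators attached to the Koszul complex $\Kos^Q(\f)$, transport the lift $\tilde\varphi$ to a homomorphism of Koszul complexes, and finish with a change-of-rings argument carried out over the polynomial rings of cohomology operators. Since $\Ext$ and $\Tor$ of finitely generated modules over a noetherian local ring do not change under $\m$-adic completion, one has $\cV_R(M)=\cV_{\widehat R}(\widehat M)$ and likewise over $S$, and $\cV_{\tilde\varphi}$ is already built from the minimal Cohen presentations; so I may assume $R=Q/I$ and $S=Q'/J$ with $Q,Q'$ regular local, and write $\tilde\varphi(f_i)=\sum_j a_{ij}g_j$ with $a_{ij}\in Q'$ (possible since $\tilde\varphi$ lifts $\varphi$, so $\tilde\varphi(I)\subseteq J$). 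Put $E:=\Kos^{Q}(\f)$ and $E':=\Kos^{Q'}(\g)$, viewed as graded-commutative DG algebras augmented to $\H_0(E)=R$ and $\H_0(E')=S$. Recall that $E$ carries an action of $\mathcal S:=k[\chi_1,\dots,\chi_n]$ by cohomology operators, with $\Spec\mathcal S$ identified with $I/\m_Q I\cong\mathbb A^n_k$; that $\Ext_E^{*}(N,k)$ is a finitely generated $\mathcal S$-module for $N\in\Df(E)$; and that $\cV_R(M)$ is the support over $\mathcal S$ of $\Ext_E^{*}(\check M,k)$, where $\check M\in\Df(E)$ is the restriction of $M$ along the augmentation $E\to R$ (this membership is exactly where the hypothesis that $M$ restricts into $\Df(R)$ is used). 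The analogous statements hold for $S$, $E'$, $\mathcal S':=k[\chi'_1,\dots,\chi'_m]$, and $\check M'\in\Df(E')$.

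Next I would set up the functoriality. Sending $q\mapsto\tilde\varphi(q)$ on degree zero and the exterior generator $e_i\mapsto\sum_j a_{ij}e'_j$ defines a homomorphism of DG algebras $\tilde\psi\colon E\to E'$ inducing $\varphi$ on $\H_0$ and compatible with the augmentations to the common residue field $k$. Consequently $\check M$ is the restriction of $\check M'$ along $\tilde\psi$, and $k_E$ is the restriction of $k_{E'}$. Moreover $\tilde\psi$ induces a ring homomorphism $\mathcal S'\to\mathcal S$, $\chi'_j\mapsto\sum_i\bar{a}_{ij}\chi_i$ (bars denoting images in $k=Q'/\m_{Q'}$), and a direct check shows that $\Spec$ of this homomorphism is the linear map $\cV_{\tilde\varphi}\colon I/\m_Q I\to J/\m_{Q'}J$; in passing, this argument also shows $\cV_{\tilde\varphi}$ is independent of the chosen lift.

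The crux is a change-of-rings comparison for $\tilde\psi$. Feeding $\check M'$ and $k$ into the Cartan–Eilenberg spectral sequence for $\tilde\psi$ gives
\[
\Ext_{E'}^{p}\!\big(\check M',\,\Ext_E^{q}(E',k)\big)\ \Longrightarrow\ \Ext_E^{p+q}(\check M,k),
\]
which, together with the adjunction isomorphism $\Ext_E^{*}(X,k)\cong\Hom_k(k\lotimes_E X,k)$ (from $k\lotimes_E-$ being left adjoint to restriction along $E\to k$), lets one analyse the $\mathcal S$-module $\Ext_E^{*}(\check M,k)$ computing $\cV_R(M)$. The point is that all modules occurring are finitely generated over $\mathcal S$ or $\mathcal S'$, so that $\Ext_E^{*}(\check M,k)$ is, up to filtrations that do not alter the support, the base change of the finitely generated $\mathcal S'$-module $\Ext_{E'}^{*}(\check M',k)$ along $\mathcal S'\to\mathcal S$. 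Proving this — that is, controlling $\Ext_E^{*}(E',k)$ as an $\mathcal S$-module and matching the operator actions through the spectral sequence closely enough to read off supports — is the main obstacle; it is the general, DG, not-necessarily-complete-intersection form of the Gulliksen-type change of rings that governs the complete intersection case of Bergh and Jorgensen. I expect the cleanest route is to treat the surjective case $R\to S$ directly and reduce the general case to it together with the (easy) flat case, in which $\cV_{\tilde\varphi}$ is an isomorphism, by factoring $\tilde\varphi$ as a power series extension $Q\hookrightarrow Q[[x_1,\dots,x_r]]$ followed by a surjection onto $Q'$; this uses that the construction $\tilde\varphi\mapsto\cV_{\tilde\varphi}$ and the preimage operation are functorial under composition.

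Granting the comparison, the proof concludes formally: the support of a finitely generated module over a polynomial ring pulls back along ring maps, so
\[
\cV_R(M)=\supp_{\mathcal S}\Ext_E^{*}(\check M,k)=\big(\Spec(\mathcal S'\to\mathcal S)\big)^{-1}\!\big(\supp_{\mathcal S'}\Ext_{E'}^{*}(\check M',k)\big)=\cV_{\tilde\varphi}^{-1}\!\big(\cV_S(M)\big).
\]
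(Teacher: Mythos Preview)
Your outline is reasonable up to the point where you write ``Proving this\ldots\ is the main obstacle'' and then assume it. That step is the entire content of the theorem, and as stated it is not clear it goes through: the spectral sequence $\Ext_{E'}^{p}(\check M',\Ext_E^{q}(E',k))\Rightarrow\Ext_E^{p+q}(\check M,k)$ requires you to understand $\RHom_E(E',k)$ as a dg $E'$-module, and for an arbitrary local map $\tilde\varphi\colon Q\to Q'$ the dg $E$-module $E'$ need not lie in $\Df(E)$ (think of the case where $Q'$ is not module-finite over $Q$), so there is no finiteness to appeal to and no obvious identification of $\Ext_E^{*}(E',k)$ as a free or even tractable $\mathcal S$-module. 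A further slip: you speak of ``the common residue field $k$'', but the residue fields of $R$ and $S$ may differ, so the map $\mathcal S'\to\mathcal S$ and the identification of $\cV_{\tilde\varphi}$ with $\Spec$ of a $k$-algebra map needs more care.

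The paper does not attempt a global change-of-rings argument. Instead it takes a Cohen factorization $R\to T\to S$ of $\widehat\varphi$ with $R\to T$ weakly regular and $T\to S$ surjective, and proves the preimage formula separately for each piece. The key tool in both pieces is the intermediate-hypersurface description
\[
\cV_E(M)=\{\overline f\in I/\m_Q I\mid \Ext_{Q/(f)}(M,k)\text{ is unbounded}\}\cup\{0\},
\]
which reduces the question to a one-variable comparison: for each minimal generator $f$ of $I$, one checks whether $\Ext_{Q/(f)}(M,k)$ is unbounded iff $\Ext_{Q'/(\tilde\varphi(f))}(M,k')$ is. For surjections this uses a result of Pollitz--\cb{S}ega on Koszul complexes; for weakly regular maps it uses flat descent and an Avramov--Foxby--Halperin argument. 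Your closing suggestion to factor $\tilde\varphi$ is in the right spirit, but the flat case is not ``easy'' in the sense you indicate---$\cV_{\tilde\varphi}$ need not be an isomorphism, and the argument requires the hypersurface description plus nontrivial input about finiteness of projective dimension under weakly regular base change.
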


In modular representation theory, given a $kG$-module $M$ where $G$ is a finite group and $k$ is a field of characteristic $p$ such that $p$ divides the order of $G$, there is a notion of support of $M$, denoted by $\cV_G(M)$ \cite{Benson/Carlson/Rickard:1996}. In this context, there is a result analogous to \Cref{theorem A} for this notion of support when looking at the restriction functor induced by an inclusion of subgroups \cite[11.2]{Benson/Iyengar/Krause:2008}. One should note that although the statements of \Cref{theorem A} and \textit{loc.cit.\!} are similar, the proofs and machinery used are very different.

The second main result of this paper concerns local maps of finite flat dimension and dimensions of support varieties. In \cite{Pollitz:2019}, Pollitz shows that a ring is a complete intersection if and only if $\cV_R(R) = \{0\}$. Thus, the dimension of $\cV_R(R)$ is in some sense a measure of how bad the singularity of the ring is. The following theorem shows that the dimension of the cohomological support variety of a ring can only get larger along a local map of finite flat dimension.

\begin{introthm}\label{theorem B}
Let $\varphi\!:\!R \to S$ be a local map of finite flat dimension. Then,
\[
\dim \cV_R(R) \les \dim \cV_S(S).
\]
\end{introthm}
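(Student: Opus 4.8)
The plan is to reduce to two manageable situations via a Cohen factorization, and then to use a base-change argument in one case and Theorem A in the other. First, since cohomological support varieties are insensitive to completion and $\widehat\varphi\colon\widehat R\to\widehat S$ is again local of finite flat dimension, I may assume $R$ and $S$ are complete. Choose a Cohen factorization $R\xrightarrow{\dot\varphi}R'\xrightarrow{\varphi'}S$ with $\dot\varphi$ flat with regular closed fibre, $R'$ complete, and $\varphi'$ surjective; because $\operatorname{fd}\varphi<\infty$ and $\dot\varphi$ is flat, the surjection $\varphi'$ satisfies $\operatorname{pd}_{R'}S<\infty$, which is the standard description of finite flat dimension in terms of Cohen factorizations. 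Since the desired inequality would follow from $\dim\cV_R(R)\le\dim\cV_{R'}(R')$ and $\dim\cV_{R'}(R')\le\dim\cV_S(S)$ handled separately, it suffices to treat: (a) $\varphi$ flat with regular closed fibre; and (b) $\varphi$ surjective with $\operatorname{pd}_R S<\infty$.

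For (a) I would prove the stronger equality $\dim\cV_R(R)=\dim\cV_S(S)$. Writing $\widehat R=Q/I$ for a minimal Cohen presentation, a flat base change produces a minimal Cohen presentation $\widehat S=Q'/IQ'$ with $Q\to Q'$ flat with regular closed fibre, and under this the induced map $\cV_{\tilde\varphi}$ is an isomorphism. All of the data entering the definition of $\cV_S(S)$ — the Koszul complex on the defining relations, the ring of cohomology operators, and the graded module whose support is taken — are obtained from the corresponding data for $R$ by the exact functor $-\otimes_Q Q'$ followed by extension of scalars along $k\hookrightarrow k'$, and passing to support is unaffected by this operation; hence the two varieties have the same dimension.

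Case (b) is where the real content lies, and it is the generalization of the Bergh and Jorgensen setting to rings that need not be complete intersections. Put $\mathfrak a=\ker\varphi$, so that $S=R/\mathfrak a$ is a perfect complex in $D^f(R)$ and hence so is $\mathfrak a$; consequently $\cV_R(S),\cV_R(\mathfrak a)\subseteq\cV_R(R)$, while the triangle $\mathfrak a\to R\to S\to\Sigma\mathfrak a$ gives the reverse containment $\cV_R(R)\subseteq\cV_R(S)\cup\cV_R(\mathfrak a)$, so $\cV_R(R)=\cV_R(S)\cup\cV_R(\mathfrak a)$. Since $S$, viewed in $D^f(S)$, restricts to a perfect complex over $R$, Theorem A applies and gives $\cV_R(S)=\cV_{\tilde\varphi}^{-1}(\cV_S(S))$. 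It then remains to establish two facts, and these are the main obstacles. First, that $\cV_{\tilde\varphi}$ is injective — I expect this to follow by translating it, through the identification $I/\m_Q I=\Tor^Q_1(\widehat R,k)$, into a vanishing statement for $\Tor^R_*(S,k)$ in high degrees forced by $\operatorname{pd}_R S<\infty$ — which yields $\dim\cV_R(S)=\dim\bigl(\cV_S(S)\cap\operatorname{im}\cV_{\tilde\varphi}\bigr)\le\dim\cV_S(S)$. Second, that $\dim\cV_R(\mathfrak a)\le\dim\cV_S(S)$; here I would induct on $\operatorname{pd}_R S$, using that $\mathfrak a$ contains an $R$-regular element to split $\varphi$ through $R/(x)$, and computing the hypersurface step directly. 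The principal difficulty is exactly this last point: unlike $S$, the ideal $\mathfrak a$ is not a module over $S$, so Theorem A does not touch it directly, and the naive hypersurface-section induction need not preserve finiteness of flat dimension over the intermediate ring, so one must either keep the induction strictly inside the class of surjective finite-flat-dimension maps or enlarge it to maps of finite complete intersection dimension.
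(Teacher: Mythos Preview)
Your reduction to the complete case and the Cohen factorization are correct, and your treatment of the weakly regular step (a) agrees with the paper. The gap is in step (b): you are missing the single fact that collapses the whole argument, namely that for any nonzero perfect complex $M$ over a local ring $R$ one has $\cV_R(M)=\cV_R(R)$, not merely the containment $\cV_R(M)\subseteq\cV_R(R)$. Over a local ring every nonzero perfect complex generates all of $\mathrm{Perf}(R)$ as a thick subcategory (Hopkins--Neeman), and cohomological support is monotone under thick closure; the paper invokes this via \cite[3.3.2]{Pollitz:2019} and \cite[2.3]{Briggs/Grifo/Pollitz:2022}. Once you know $\cV_{R'}(S)=\cV_{R'}(R')$, your own application of Theorem~A already gives
\[
\cV_{R'}(R')=\cV_{R'}(S)=\cV_{\varphi'}^{-1}(\cV_S(S)),
\]
and the injectivity of $\cV_{\varphi'}$---which you correctly anticipate, and which the paper obtains from the injection $\pi_2(R')\hookrightarrow\pi_2(S)$ of homotopy Lie coalgebras valid for surjections of finite flat dimension \cite[3.5]{Avramov/Foxby/Halperin:1985}---yields $\dim\cV_{R'}(R')\les\dim\cV_S(S)$ immediately. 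The $\mathfrak a$ detour and the proposed induction on $\operatorname{pd}_R S$ are therefore unnecessary; in fact, since $\mathfrak a$ is itself a nonzero perfect complex one has $\cV_R(\mathfrak a)=\cV_R(R)$ as well, so your decomposition $\cV_R(R)=\cV_R(S)\cup\cV_R(\mathfrak a)$ is a tautology and the ``principal difficulty'' you identify at the end is literally the original inequality in disguise.

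One further technical point: Theorem~A, as stated, requires algebraically closed residue fields. The paper handles this by first passing to residual algebraic closures of the intermediate ring and of $S$; these are weakly regular extensions and hence leave $\dim\cV$ unchanged by \Cref{r_dim-equality}. You should insert the same step before applying Theorem~A in (b).
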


This falls in line with other theorems and conjectures that suggest a singularity can only get worse along maps of finite flat dimension. For instance, if $R\to S$ is a local map of finite flat dimension, the Cohen-Macaulay defect and the complete intersection defect of $R$ can only be smaller than those of $S$ \cite{Avramov/Foxby/Herzog:1994, Avramov/Foxby/Halperin:1985}. As a corollary of \Cref{theorem B}, we get $\dim \cV_{R_\p}(R_\p) \les \dim \cV_R(R)$ for all prime ideals $\p \in \Spec (R)$, which recovers Avramov's result that the complete intersection property localizes \cite{Avramov:1977}.

The structure of the paper is as follows. In \Cref{s_prelim}, we recall definitions and useful facts that will be used throughout. In \Cref{s_ext-scal-weak-reg}, we look at how cohomological support varieties behave when extending scalars along weakly regular maps with a key application being residual algebraic closures. In \Cref{s_Restriction of Scalars} we state and prove \Cref{theorem A} and give some examples and corollaries. In \Cref{s_ffd}, we state and prove \Cref{theorem B} and consider some corollaries and applications of said theorem.

\section*{Acknowledgements}
The author would like to thank Elo\'isa Grifo for the many hours she invested in advising the author and for discussing ideas that eventually turned into this paper. Special thanks is also owed to Josh Pollitz who had suggested \Cref{theorem B} and \Cref{co_localization-ci} during a research visit I had to Syracuse University as well as to Ben Briggs whose conversations led to finishing the proof of \Cref{t_extensions-scalars-weak-reg}. I would also like to thank Mark Walker in addition to the previous three for numerous helpful comments on previous drafts of this paper. Lastly, I would like to thank Srikanth Iyengar for pointing out the parallels between \Cref{theorem A} and the analogous result in modular representation theory.

This material is based upon work supported by the National Science Foundation under Grant No. DMS-1928930 and by the Alfred P. Sloan Foundation under grant G-2021-16778, while the author was in residence at the Simons Laufer Mathematical Sciences Institute (formerly MSRI) in Berkeley, California, during the Spring 2024 semester. The author was also partially funded by NSF grants DMS-2236983 and DMS-2342256.

\section{Preliminaries and Basic Results}\label{s_prelim}

\subsection{Fixed notation for complexes}\label{notation}
Unless otherwise stated, all rings will be assumed to be commutative noetherian local rings.

Let $R$ be a ring.
An $R$-module will be viewed as a complex concentrated in degree $0$. If $F$ is a complex and $a$ is an element in $F_n$, then $\lvert a\rvert = n$. We will generally only consider elements of a single degree in a complex.

If $F$ and $G$ are both complexes, then $\Hom_R(F,G)$ is the complex where the modules in each degree and differentials are given by
\[
\Hom_R(F,G)_d = \{\beta \!: \! F \to G \mid \beta \text{ is an $R$-linear map of degree $d$}\}
\]
\[
\del^{\Hom(F,G)}(\beta) = \del^G\circ \beta - (-1)^{\lvert \beta \rvert} \beta \circ \del^F. 
\]
This makes $\Hom_R(F,G)$ into a complex of $R$-modules whose underlying graded module structure is given by $\bigoplus _{d \in \Z} \Hom_R(F,G)_d$. Morphisms of complexes are cycles of degree $0$ in $\Hom_R(F,G)$. 

If $F$ and $G$ are both complexes, then $F \otimes_R G$ is the complex where the modules in each degree and differentials are given by
\[
(F \otimes G)_d =\bigoplus_{i+j=d} F_i \otimes F_j
\]
\[
\del^{F \otimes G} (a \otimes b) = \partial^F(a)\otimes b + (-1)^{\lvert a \rvert} a \otimes \partial^G(b).
\]

If $F$ and $G$  are $R$-complexes and $f \colon F\xra{}G$ is a morphism of complexes, then the induced map on homology is denoted by $H(f) \! :\!H(F) \xra{}H(G)$. To denote that a map is a quasiisomorphism or that two complexes are quasiisomorphic, we write $F \xra{\simeq} G$ or $F\simeq G$ respectively. 

\subsection{Differential Graded Algebras and Modules}\label{s_dga}
We make use of the theory of differential graded algebras and modules.
A good reference for such topics is \cite{Avramov:2010}. We will also work in the derived category of dg modules over a dg algebra. For more information about derived categories, the author refers the  reader to \cite{Avramov/Buchweitz/Iyengar/Miller:2010, Krause:2021, Christensen/Foxby/Holm:2024}. In this paper all dg algebras are assumed to be strictly commutative dg algebras and will be concentrated in nonnegative degrees, even if not explicitly stated. That is, for any dg algebra $A$, 
\begin{itemize}
\item  $a_1a_2 = (-1)^{\lvert a_1 \rvert \lvert a_2 \rvert}a_2a_1$ for all $a_1,a_2 \in A$.
\item $a^2 =0$ for all $a\in A$ such that $\lvert a \rvert$ is odd.
\item $A_n =0$ for all $n<0$.
\end{itemize}

\begin{ch}\label{c_derived-cat}
Let $A$ be a dg algebra. We write $A^\natural$ for the graded algebra where we forget the differential of $A$. For a dg module $M$ over $A$, we write $M^\natural$ for the underlying graded module over $A^\natural$ where we forget about the differential on $M$.
\end{ch}

\begin{ch}
Let $A$ be a dg algebra. We write $D(A)$ for its derived cateogry. That is, objects are dg $A$-modules and the morphisms can be obtained from the morphisms of dg $A$-modules, but we allow formal inverses to quasiisomorphisms. 
We write $D^f(A)$ for the full subcategory of $D(A)$ consisting of those objects $M$ such that $H(M)$ is finitely generated over $H(A)$. In this paper, all dg algebras will be bounded, so objects of $D^f(A)$ will have bounded and degree-wise finitely generated homology.
\end{ch}

\begin{ch}
Given a morphism $\varphi \colon A \to B$ between two dg algebras, there is a functor
\[
\varphi_*:D(B) \to \!D(A)
\]
which is restiction of scalars along $\varphi$. For a dg $B$-module $M$, by abuse of notation, we will write $\varphi_*(M) = M$. Note that since we are assuming dg algebras are nonnegatively graded, there is always a map of dg algebras $A \to \!H_0(A)$ for any dg algebra $A$. Therefore, any complex over $H_0(A)$ inherits a dg $A$-module structure.
\end{ch}

\begin{defn}\label{d_restrict-fullsubcat}
Let $A \to B$ be a morphism of dg algebras. Let $D^f(B/A)$ denote the full subcategory of $D^f(B)$ consisting of dg $B$-modules that restrict to elements in $D^f(A)$. That is, objects of $D^f(B/A)$ are the objects of $D^f(B)$ that land in $D^f(A)$ under restriction of scalars. Note that this notation is also used in \cite{Briggs/McCormick/Pollitz:2022}, but has a slightly different meaning.
\end{defn}

The rest of this section concerns semifree dg modules. Much of it comes from the appendix in \cite{Avramov/Iyengar/Nasseh/SatherWagstaff:2019} as well as \cite{Felix/Halperin/Thomas:2001} and \cite{Avramov:2010}. We refer the reader there for more details.

\begin{ch}
Let $A$ be a dg $R$-algebra and $F$ a dg $A$-module. A \textit{semibasis} of $F$ is a well-ordered subset $\{\mathbf{f}\} \subseteq F$ which is a basis of $F^\natural$ over $A^\natural$ and for any $f \in \{\mathbf f\}$ we have that 
\[
\partial(f) \in \sum_{e<f} Ae.
\]

A dg module $F$ that admits a (finite) semibasis is said to be \textit{(finite) semifree}.
This definition is equivalent to having a filtration of dg $A$-modules
\[
0 = F(-1) \subseteq F(0) \subseteq F(1) \subseteq \dotsm \subseteq F
\]
such that $\bigcup_{n \in\mathbb N} F(n) = F$ and the dg $A$-module $F(n)/F(n-1)$ is isomorphic to a direct sum of shifts of $A$.
\end{ch}

\begin{ch}\label{c_tensor-qis}
A crucial property of semifree modules is that tensoring with them preserves quasiisomorphisms \cite[6.7, ii)]{Felix/Halperin/Thomas:2001}.
\end{ch}

\begin{ch}\label{c_semifree-existence}
A \textit{semifree resolution} of a dg $A$-module $M$ is a quasiisomorphism of dg $A$-modules $\varepsilon \!: F \! \xra{\simeq} \!M$ such that $F$ is semifree. An important fact is that semifree resolutions always exist \cite[6.6]{Felix/Halperin/Thomas:2001}.
\end{ch}

\begin{ch}
Given a dg algebra $A$ and a dg $A$-module $M$, there are two exact functors 
\[
- \lotimes_A M : D(A) \to \!D(A) \ \ \ \text{and} \ \ \ \RHom(M,-): D(A) \to \!D(A) 
\]
and by taking homology, we define
\[
\Tor^A(-,M) := \H(-\lotimes_AM) \ \ \ \text{and} \ \ \   \Ext_A(M,-):= \H(\RHom_A(M,-)).
\]
If $F \xra{\simeq} M$ is a semifree resolution, then for any dg $A$-module $N$,
\[
\Tor^A(N,M) \cong \H(N\otimes_AF) \ \ \ \text{and} \ \ \ \Ext_A(M,N) \cong \H(\Hom_A(F,N)).
\]
\end{ch}

\subsection{Regular factorizations}\label{s_reg-fac}
In this subsection, we recall the definition and basic facts about regular factorizations. All statements and definitions can be found in \cite{Avramov/Foxby/Herzog:1994}. A local homomorphism $(R,\m) \to (T, \mathfrak n)$ is said to be \textit{weakly regular} if it is flat and the closed fiber $T/\m T$ is regular.
Given a local homomorphism $\psi:R \! \to \!S$ a \textit{regular factorization} of $\psi$ is a factorization of the form
\[
\begin{tikzcd}
&T\ar[dr,two heads, "\psi'"] & \\
R \ar[ur,"\dot \psi"] \ar[rr, "\psi"] && S,
\end{tikzcd}
\]
where $\dot \psi$ is weakly regular and $\psi'$ is surjective. When $T$ is complete, this is called a \textit{Cohen factorization}. A Cohen factorization of $\psi$ exists if and only if $S$ is complete \cite[1.1]{Avramov/Foxby/Herzog:1994}.

\begin{ex}
An example of a weakly regular morphism is $R \to \widehat R$ where $\widehat R$ is the completion of $R$ at its maximal ideal. Indeed, $R \to \widehat R$ is flat and the closed fiber is $\widehat R / \m_R \widehat R \cong \widehat R/\m_{\widehat R}$ which is a field and thus regular. 
\end{ex}

\begin{ch}
Our interest in regular factorizations in this paper stems from the fact that to prove \Cref{theorem A}, one only has to consider weakly regular maps and surjective maps separately. 
\end{ch}

\begin{ch}\label{c_lift-weakly-reg}
Given a surjective local homomorphism $\pi\!:\!Q \! \to \!R$ and a weakly regular homomorphism $\psi\!:\!R \!\to \!S$, such that $S$ is complete, one may construct maps $\tilde \psi \!:\!Q \to Q'$ and $\pi'\!:\!Q' \to S$ such that following commutes
\[
\begin{tikzcd}
Q \ar[rr,"\tilde \psi"] \ar[dd,"\pi", swap] && Q' \ar[dd,"\pi'"] \\ \\
R \ar[rr,"\psi"] && S
\end{tikzcd}
\]
where $Q'$ is complete, $\tilde \psi$ is weakly regular, $\pi'$ is surjective and $S \cong Q' \otimes_Q R$. 
\end{ch}

\begin{ch}
A consequence of the definitions of regular local rings and weakly regular maps yields the following fact. If $Q \!\to \!Q'$ is weakly regular and $Q$ is a regular local ring, then $Q'$ is also a regular local ring. Thus \ref{c_lift-weakly-reg} says that if $\pi\!:\!Q\! \to \!R$ is a Cohen presentation and $\psi\!:\!R \! \to \!S$ is a weakly regular map of complete local rings, then one can find a Cohen presentation $\pi'\!:\!Q' \!\to \!S$ such that $\psi\!:\! R \! \to \! S$ lifts to a weakly regular map $\tilde \psi\!:\!Q \! \to \!Q'$. Moreover, if $\pi\!:\!Q \to R$ is a minimal Cohen presentation, then $\pi'\!:\!Q' \to S$ is as well \cite[1.5 and 1.6]{Avramov/Foxby/Herzog:1994}. 
\end{ch}

\subsection{Graded support}
Let $\S  = Q[\chi_1, \dotsc, \chi_n]$ be a graded polynomial ring over a commutative ring $Q$ with each $\chi_i$ having cohomological degree $2$. Let $\spec(\S)$ be the set of homogeneous prime ideals of $\S$. We equip $\spec \S$ with the Zariski topology. That is, closed subsets are of the form $
\mathbf V(\mathcal{I})\coloneqq\{\p\in \spec \S:\mathcal{I}\subseteq \p\}
$ 
for a homogeneous ideal $\mathcal{I}$ of $\S$. When $Q$ is a field, we can view points inside of $\spec \S$ as conical affine varieties in $n$-dimensional affine space $\mathbb A^n_Q$, with the unique homogeneous maximal ideal $(\chi_1,\dotsc,\chi_n)$ corresponding to the origin.

By \cite[2.4]{Carlson/Iyengar:2015}, the support of a finitely generated $\S$-module $X$ is given by
\[
\supp_\S X\coloneqq \{\p\in \spec \S: X_\p\not \simeq 0\}\ = \{\p \in \spec S \mid \kappa(\p)\lotimes_{\S} X \not \simeq 0\}.
\]
The support of a finitely generated graded $\S$ module $X$ is a closed subset of $\spec \S$,
\[
\supp_\S X=\mathbf V(\ann_\S X)\,.
\]

\subsection{Cohomological support varieties}\label{s_support}
In this subsection, we recall the definition of cohomological support varieties as well as some useful properties. The theory of support varieties was first introduced in commutative algebra by Avramov in \cite{avramov:1989} who defined them over local complete intersections. D. Jorgensen extended the definition of these support varieties using intermediate hypersurfaces 
to all local rings \cite{Jorgensen:2002}. In \cite{Pollitz:2021}, Pollitz recovers these two theories with the theory of cohomological support varieties over derived complete intersections. We refer the reader to \textit{loc.cit.\!} for more details.

\begin{ch}\label{c_cohomological-op}
Let $Q$ be a commutative noetherian ring, and $f_1,\dotsc, f_n \in Q$. Let $E$ be the Koszul complex, $E=\Kos^Q(f_1,\dotsc,f_n)$, and let $M$ and $N$ be dg $E$-modules. Denote the \textit{ring of cohomological operators} by $\S:=Q[\chi_1,\dotsc,\chi_n]$, where each $\chi_i$ has cohomological degree $2$. By \cite[4.2.5]{Pollitz:2021}, the Hochschild cohomology $\HH(E\vert Q)$, is given by 
\[
\HH(E\vert Q) = \H(E)[\chi_1, \dotsc, \chi_n]
\]
and so $\S$ surjects onto the graded subalgebra $\H_0(E)[\chi_1, \dotsc, \chi_n] \subseteq \HH(E\vert Q)$. Hence the left action of $\Ext_E(N,N)$ on $\Ext_E(M,N)$ via the composition product turns $\Ext_E(M,N)$ into a graded $\S$-module via the ring map $\HH(E\vert Q) \to \Ext_E(N,N)$ \cite[2.3]{Briggs/Iyengar/Letz/Pollitz:2020}. When $(Q,\m ,k)$ is a local ring, we define $\A := \S \otimes _Q k$.
\end{ch}

\begin{ch}
By \cite[5.1.3]{Pollitz:2021}, if $M$ and $N$ are objects in $D^f(E)$ such that $\Ext_Q^{\gg 0} (M,N)=0$, then $\Ext_E(M,N)$ is a finitely generated graded $\S$-module. Note this always holds when $Q$ is a regular local ring. 
\end{ch}

\begin{ch}\label{c_intersection-coho-supp}
Using the notation in \ref{c_cohomological-op}, the \textit{cohomological support variety} of a pair of objects $M$ and $N$ in $D^f(E)$ is 
\[
\V_E(M,N) := \supp_{\S}( \Ext_E(M,N) )\subseteq \spec \S.
\]
When $(Q, \m,k)$ is local, we define
\[
\cV _E(M,N) := \supp_{\A}( \Ext_E(M,N)\otimes_Q k) \subseteq \spec \A.
\]
For a single object $M \in D^f(E)$, we define its cohomological support variety to be 
\[
\V_E(M) := \V_E(M,M),
\]
and we define $\cV_E(M)$ similarly when $Q$ is local. Moreover, when $Q$ is a regular local ring, by \cite[5.3.1]{Pollitz:2021}, we have
\[
\cV _E(M) = \cV _E(M,k) = \cV _E(k,M).
\]
\end{ch}

\begin{ch}\label{c_SV-rings}
Let $R$ be a local ring and $(Q, \m, k) \! \twoheadrightarrow \!\widehat{R}$ a minimal Cohen presentation. That is, $\widehat{R} \cong Q/(f_1,\dotsc,f_n)$ where $Q$ is a complete regular local ring and $f_1, \dotsc, f_n \in \m^2$ minimally generate $(f_1,\dotsc, f_n)$. We define the \textit{cohomological support variety} of a pair of objects $M$ and $N$ in $D^f(R)$ to be 
\[
\cV _R(M,N) :=\supp_{\A}( \Ext_E(\widehat{M},\widehat{N})\otimes_Q k) \subseteq \spec \A.
\]
where $\widehat M$ and $\widehat N$ denote $M \otimes_R \widehat R$ and $N \otimes_R \widehat R$ respectively. We define the cohomological support variety of a single object $M$ in $D^f(R)$ as 
\[
\cV_R(M) := \cV_R(M,M).
\]
Thus $\cV _R(M) = \cV _E(\widehat{M})$ where $E = \Kos^Q(f_1,\dotsc,f_n)$. Note that these definitions depend on the choice of minimal Cohen presentation but by \cite[6.1.2]{Pollitz:2021}, different choices of minimal Cohen presentations give isomorphic varieties. However, the ambient space in which we view $\cV_R(M)$ in changes depending on what minimal Cohen presentation is chosen. We deal with this technical detail in \Cref{l_compatability-Cohen-pres}.
\end{ch}

\begin{ch}\label{c_intermediate-hyperplane}
By \cite[6.2.4]{Pollitz:2021}, we have the following fact. Let $(Q, \m,k)$ be a regular local ring with algebraically closed residue field. Let $E = \Kos^Q(f_1,\dotsc, f_n)$, where  $f_1,\dotsc,f_n \in \m^2$ minimally generate an ideal $I$. Then viewing $\cV_E(M)$ as an affine variety in $\mathbb A^n_k \cong I/\m I$,
\[
\cV_E(M) = \{\overline f\in I/\m I\mid \Ext_{E_f}(M,k) \text{ is unbounded}\} \cup \{0\}
\]
where $E_f=\Kos^Q(f)$ and $f$ is a lift of $\overline f$ to $Q$. Note that by \cite[6.1]{Felix/Halperin/Thomas:2001},
\[
\Ext_{E_f}(M,k) \cong \Ext_{Q/(f)}(M,k).
\]
\end{ch}

\section{Extension of Scalars Along Weakly Regular Maps}\label{s_ext-scal-weak-reg}
In this section, we study what happens to cohomological support varieties when we extend scalars along a weakly regular map. Two points of interest will be completions and residual algebraic closures, whose definition we recall further down.

We set the following notation throughout this section. Let $Q$ and $Q'$ be regular local rings, and $\tilde \varphi\!:\!Q \to Q'$ a flat local homomorphism. Let $R=Q/I$ and $R' = R \otimes_Q Q'$ and define $J = IQ'$ so that $R' = Q'/J$.  Assume there exists a flat local homomorphism $\varphi\!:\!R \to R'$ making the following diagram commute:
\[
\begin{tikzcd}
Q\ar[r, "\tilde \varphi"] \ar[d, two heads] & Q' \ar[d, two heads] \\
R\ar[r, "\varphi"] & R'
\end{tikzcd}
\]
Let $f_1,\dotsc, f_n \in Q$ generate the ideal $I$, so $\tilde \varphi(f_1),\dotsc,\tilde \varphi(f_n) \in Q'$ generate $J$. We define the following Koszul complexes:
\[
E=\Kos^Q(f_1,\dotsc,f_n)\quad \text{and} \quad
E' =\Kos^{Q'}(\tilde \varphi(f_1), \dotsc, \tilde \varphi(f_n)).
\]
Let $\S = Q[\chi_1,\dotsc, \chi_n]$ be the cohomological operators corresponding to $E$ and $\S'=Q'[\chi_1',\dotsc, \chi_n']$ be the cohomological operators corresponding to $E'$. Let $\Psi\colon \S \to \S'$ be the map of graded rings induced by $\tilde \varphi$ sending $\chi_i \to \chi_i'$.

\begin{theorem}\label{t_extensions-scalars-weak-reg}
Using the notation above, for any two objects $M$ and $N$ in $D^f(E)$ we have,
\[
\V_{E}(M,N) = \Psi^*(\V_{E'}(M\otimes_QQ',N \otimes_Q Q')),
\]
where $\Psi^* :\spec S' \to\spec \S$ is the induced map on $\spec$.
\end{theorem}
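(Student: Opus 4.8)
The plan is to reduce the statement to a computation about supports of finitely generated graded modules over the cohomological operator rings $\S$ and $\S'$, and then exploit flatness of $\tilde\varphi$. First I would fix a finite semifree resolution $F \xra{\simeq} M$ of $M$ over $E$, which exists by \ref{c_semifree-existence} (and can be taken degreewise finite since $M \in D^f(E)$ and $E$ is a finite $Q$-complex). Because $\tilde\varphi\colon Q \to Q'$ is flat, the base-changed complex $F \otimes_Q Q'$ is a finite semifree resolution of $M \otimes_Q Q'$ over $E' = E \otimes_Q Q'$; here one uses that $-\otimes_Q Q'$ is exact and carries the semibasis filtration of $F$ to a semibasis filtration of $F\otimes_Q Q'$. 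Consequently
\[
\Ext_{E'}(M\otimes_Q Q', N\otimes_Q Q') \;\cong\; \H\big(\Hom_{E'}(F\otimes_Q Q',\, N\otimes_Q Q')\big) \;\cong\; \H\big(\Hom_{E}(F,N)\otimes_Q Q'\big)
\]
\[
\;\cong\; \Ext_E(M,N)\otimes_Q Q',
\]
where the middle isomorphism is the standard adjunction/flat-base-change identity $\Hom_{E}(F,N)\otimes_Q Q' \cong \Hom_{E'}(F\otimes_Q Q', N\otimes_Q Q')$ for a finite semifree $F$, and the last isomorphism again uses flatness of $Q\to Q'$ to pass homology through the tensor. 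I would also check this isomorphism is $\S$-linear, where $\S$ acts on the right-hand side through $\Psi\colon \S\to\S'$ and the $\S'$-action on the left; this is because the Hochschild cohomology classes $\chi_i$ are compatible with base change, i.e. $\Psi$ intertwines the two actions (this is essentially the definition of $\Psi$ together with the base-change compatibility of $\HH(E|Q)$ recorded in \ref{c_cohomological-op}).

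Having identified the $\Ext$ modules, the theorem becomes a purely commutative-algebra statement: for a finitely generated graded $\S$-module $X$ (here $X = \Ext_E(M,N)$, finitely generated by \cite[5.1.3]{Pollitz:2021} since $Q$ is regular), one has
\[
\supp_{\S'}(X\otimes_Q Q') \;=\; (\Psi^*)^{-1}\big(\supp_{\S} X\big),
\]
and then taking $\Psi^*$ of both sides gives the claim once we know $\Psi^*$ is surjective onto $\supp_\S X$ when restricted to $\supp_{\S'}(X\otimes_Q Q')$ — more precisely, $\Psi^*(\supp_{\S'}(X\otimes_Q Q')) = \supp_\S X$. Since $\S' = \S\otimes_Q Q'$ is a flat (indeed faithfully flat, as $Q\to Q'$ is a flat local map) extension of $\S$, and $X\otimes_\S \S' = X\otimes_Q Q'$, standard properties of support under faithfully flat base change give both that $\Psi^*$ maps $\supp_{\S'}(X\otimes_\S\S')$ onto $\supp_\S X$ (going-down / faithful flatness surjectivity) and that $\supp_{\S'}(X\otimes_\S\S') = (\Psi^*)^{-1}(\supp_\S X)$ (since annihilators satisfy $\ann_{\S'}(X\otimes_\S\S') = (\ann_\S X)\S'$ for finitely generated $X$, so $\mathbf V(\ann_{\S'}(X\otimes_\S\S')) = \mathbf V((\ann_\S X)\S')= (\Psi^*)^{-1}\mathbf V(\ann_\S X)$). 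Combining, $\Psi^*(\V_{E'}(M\otimes_Q Q', N\otimes_Q Q')) = \Psi^*((\Psi^*)^{-1}(\V_E(M,N))) = \V_E(M,N)$, where the last equality is surjectivity of $\Psi^*$ onto $\supp_\S X$.

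The main obstacle I anticipate is the bookkeeping in the first paragraph: verifying carefully that base change along $\tilde\varphi$ takes a finite semifree resolution over $E$ to one over $E'$, and — more delicately — that the resulting natural isomorphism $\Ext_{E'}(M\otimes_Q Q', N\otimes_Q Q')\cong \Ext_E(M,N)\otimes_Q Q'$ is compatible with the cohomological operators, i.e. genuinely $\S$-linear via $\Psi$. This requires tracing through the construction of the action in \ref{c_cohomological-op}: the $\chi_i$ come from $\HH(E|Q)$, and one needs that the base-change map $\HH(E|Q)\to \HH(E'|Q')$ sends $\chi_i\mapsto\chi_i'$ and is compatible with the module actions on $\Ext$. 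Given Pollitz's description $\HH(E|Q) = \H(E)[\chi_1,\dots,\chi_n]$ and the functoriality of Hochschild cohomology under base change, this should go through, but it is the step that needs the most care. Everything after that is formal: the support computation is a textbook faithfully-flat-base-change argument, and the passage from $\V$ to $\cV$ is not needed here since the theorem is stated at the level of $\V_E$ over $\spec\S$.
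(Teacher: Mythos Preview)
Your approach is essentially the same as the paper's: establish an $\S'$-linear isomorphism $\Ext_{E'}(M\otimes_Q Q',N\otimes_Q Q')\cong \Ext_E(M,N)\otimes_Q Q'$, then deduce the support identity from faithful flatness of $\S\to\S'$. The paper organizes the first step slightly differently---it factors through $\Ext_E(M,N\otimes_Q Q')$ and cites \cite[2.2]{Avramov/Buchweitz:2000b}, \cite[4.4.3]{Pollitz:2021}, and \cite[12.9.10]{Yekutieli:2019} rather than manipulating an explicit resolution---and for the support step it checks the criterion $-\lotimes_{\S'}\kappa(\q)\not\simeq 0$ pointwise instead of using annihilators, but these are cosmetic differences. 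The operator compatibility you flag as the delicate point is exactly what the paper extracts from \cite[4.4.3]{Pollitz:2021}, namely the identity $\Ext_\Phi(\alpha,\beta)\circ\chi_i'=\chi_i\circ\Ext_\Phi(\alpha,\beta)$.

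One terminological slip to fix: you cannot in general take a \emph{finite} semifree resolution of $M$ over $E$---that would force $M$ to be perfect in $\D(E)$, which fails already for $M=k$ when $R$ is not regular. What is available (and what your parenthetical seems to intend) is a semifree resolution that is bounded below and has finitely many semibasis elements in each degree; pairing this with a bounded representative of $N$ makes each $\Hom_E(F,N)_d$ a finite product, so the base-change identity $\Hom_E(F,N)\otimes_Q Q'\cong\Hom_{E'}(F\otimes_Q Q',N\otimes_Q Q')$ holds degreewise and your argument goes through.
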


\begin{proof}
Note that by our assumptions, $E' \cong E \otimes_Q Q'$. Following the proof of \cite[2.2]{Avramov/Buchweitz:2000b} and using \cite[4.4.3]{Pollitz:2021}, the map
\begin{equation}\label{eq_1}
\Ext_{E'}(M\otimes_QQ',N \otimes_QQ') \to \Ext_{E}(M,N \otimes_QQ')
\end{equation}
is a bijection. Since $M$ and $N$ are objects of $D^f(E)$ and $Q'$ is flat over $Q$, the map 
\[
\Ext_{E}(M, N \otimes_QQ') \to \Ext_{E}(M,N)\otimes_QQ'
\]
is a bijection as well \cite[12.9.10]{Yekutieli:2019}. Moreover, these bijections are compatible with the isomorphism of graded $Q'$-algebras $\S' \cong \S \otimes_QQ'$. Indeed, adopting the notation from \cite[4.4.3]{Pollitz:2021}, let $\alpha\colon M \to M\otimes_QQ'$ be the $E$-linear map defined by $\alpha(m) = m \otimes1$ and $\beta = \Id_{N\otimes_QQ'}$. Notice that the morphism in \Cref{eq_1} is the natural map of $\Ext$ modules,
\[
\Ext_{\Phi}(\alpha,\beta) \colon \Ext_{E'}(M\otimes_Q Q', N \otimes_Q Q') \to \Ext_E(M,N\otimes_QQ'),
\]
where $\Phi \colon E \to E'$ is the dg algebra morphism defined by extending the map $Q \to Q'$ by sending the $i^\text{th}$ degree $1$ generator of $E$ to the $i^\text{th}$ degree $1$ generator of $E'$.
By \textit{loc.cit.}, the following equality holds which proves the claim,
\[
\Ext_{\Phi}(\alpha,\beta) \circ \chi'_i = \chi_i \circ \Ext_{\Phi}(\alpha, \beta).
\]
Thus, for any $\mathfrak q \in \spec S'$, setting $\p:= \mathfrak q \cap \S$, we have the following:
\[
\Ext_{E'}(M\otimes_QQ', N \otimes_Q Q') \lotimes_ {\S'} \kappa(\mathfrak q)  \simeq \Ext_{E}(M,N)\lotimes_{\S}\kappa(\p) \lotimes_{\kappa(\p)} \kappa(\mathfrak q).
\]
Since $\kappa(\p) \! \to \! \kappa(\mathfrak q)$ is faithfully flat, 
\[
\Ext_{E'}(M\otimes_QQ',N\otimes_QQ') \lotimes_{\S'}\kappa(\mathfrak q) \simeq 0  \iff  \Ext_{E}(M,N) \lotimes_{\S}\kappa(\p) \simeq 0 .
\]
Thus, \[
\mathfrak q \in \V_{E'}(M\otimes_Q Q', N \otimes_Q Q')  \iff \p \in \V_{E}(M,N).
\]
In other words,
\[
\V_{E'}(M\otimes_QQ',N\otimes_QQ') = \{\mathfrak q \in \spec S' \mid \mathfrak q \cap \S \in \V_{E}(M,N)\} = {\Psi^*}^{-1}(\V_{E}(M,N)).
\]

Since $Q\! \to \!Q'$ is a flat, local homomorphism, it is faithfully flat. Thus, $\Psi\!:\! \S \to \S'$ is also faithfully flat, so $\Psi^*$ is surjective. Hence, the claim follows from the equality above.
\end{proof}

\begin{chunk}
If $\varphi \! : \! R\to R'$ is a weakly regular map between local rings (not necessarily quotients of regular local rings), then the completion, $\widehat \varphi \! : \! \widehat R \to \widehat {R'}$, is weakly regular as well. Invoking \Cref{c_lift-weakly-reg}, we get a commutative diagram
\[
\begin{tikzcd}
Q \ar[r,"\tilde \varphi"] \ar[d,"\pi", swap, two heads] & Q' \ar[d,"\pi'", two heads] \\
\widehat R \ar[r,"\varphi"] & \widehat{R'}
\end{tikzcd}
\]
where the vertical maps give minimal Cohen presentations, the horizontal maps are weakly regular, and $\widehat{R'}\cong \widehat R \otimes_Q Q'$. Thus, we may use \Cref{t_extensions-scalars-weak-reg} on the completion of $\varphi\! : \! R \to R'$.
\end{chunk}

\begin{corollary}
Using the setup above, let $\widehat R \cong Q/I$ where $I$ is minimally generated by $f_1,\dotsc,f_n$. If $\tilde\varphi(f_1),\dotsc, \tilde \varphi(f_n)$ minimally generates $IQ'$, then for $M$ and $N$ objects in $D^f(R)$,
\[
\V_{R}(M,N) = \Psi^*(\V_{R'}(M\otimes_RR',N \otimes_R R')).
\]
\end{corollary}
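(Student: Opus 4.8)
The plan is to deduce this corollary directly from \Cref{t_extensions-scalars-weak-reg} by passing to completions, since cohomological support varieties of objects of $D^f(R)$ are defined via the Koszul complex attached to a minimal Cohen presentation of $\widehat R$. First I would replace $R$ by $\widehat R$, $R'$ by $\widehat{R'}$, and the objects $M$, $N$ by their completions $\widehat M = M \otimes_R \widehat R$ and $\widehat N = N \otimes_R \widehat R$. By definition, $\cV_R(M,N) = \cV_E(\widehat M, \widehat N)$ where $E = \Kos^Q(f_1,\dotsc,f_n)$, and likewise $\cV_{R'}(M\otimes_R R', N\otimes_R R') = \cV_{E'}(\widehat{M}\otimes_Q Q', \widehat{N}\otimes_Q Q')$ once we know that $\tilde\varphi(f_1),\dotsc,\tilde\varphi(f_n)$ is a minimal generating set of $IQ' = J$, which is exactly the hypothesis; this is what guarantees that $E'$ arises from a \emph{minimal} Cohen presentation of $\widehat{R'}$, so that $\cV_{E'}$ really does compute $\cV_{R'}$.

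Next I would verify that the setup of \Cref{t_extensions-scalars-weak-reg} applies. The diagram preceding that theorem requires a flat local map $\tilde\varphi\colon Q \to Q'$ of regular local rings and a flat local map $\varphi\colon R \to R'$ with $R' = R\otimes_Q Q'$ and $J = IQ'$. In the present situation $\varphi\colon R\to R'$ is weakly regular, hence flat; passing to completions and invoking the discussion in \Cref{c_lift-weakly-reg} (as recorded in the chunk immediately before the corollary), we obtain $\tilde\varphi\colon Q\to Q'$ weakly regular between complete regular local rings with $\widehat{R'}\cong\widehat R\otimes_Q Q'$, and the compatibility square commutes. So all hypotheses of \Cref{t_extensions-scalars-weak-reg} are met with $\widehat M$ and $\widehat N$ in place of $M$ and $N$. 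Applying it gives
\[
\V_E(\widehat M,\widehat N) = \Psi^*\big(\V_{E'}(\widehat M\otimes_Q Q',\widehat N\otimes_Q Q')\big).
\]

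The last step is to pass from $\V$ to $\cV$, i.e.\ to tensor down to the residue field. Since $\cV_E(-) = \supp_{\A}(\Ext_E(-)\otimes_Q k)$ with $\A = \S\otimes_Q k$ and similarly over $Q'$, and since $Q \to Q'$ is local so $k = Q/\m_Q$ maps to $k' = Q'/\m_{Q'}$, I would observe that $\Psi\colon \S\to\S'$ induces $\bar\Psi\colon\A\to\A'$ and that the faithfully flat base-change argument in the proof of \Cref{t_extensions-scalars-weak-reg} goes through verbatim after applying $-\otimes_Q k$: for $\q\in\spec\A'$ with $\p = \q\cap\A$, one has $\Ext_{E'}(\widehat M\otimes_Q Q',\widehat N\otimes_Q Q')\otimes_{Q'}k' \simeq (\Ext_E(\widehat M,\widehat N)\otimes_Q k)\lotimes_{\A}\kappa(\p)\lotimes_{\kappa(\p)}\kappa(\q)$ after localizing, so membership in the two supports corresponds. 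Combined with surjectivity of the induced map on $\spec\A$ (again because $\Psi$, hence $\bar\Psi$, is faithfully flat), this yields $\cV_R(M,N) = \Psi^*(\cV_{R'}(M\otimes_R R', N\otimes_R R'))$ as desired.

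The main obstacle I anticipate is purely bookkeeping rather than conceptual: making sure the residue fields and ambient graded rings are tracked correctly through completion and base change, in particular checking that the minimality hypothesis on $\tilde\varphi(f_1),\dotsc,\tilde\varphi(f_n)$ is precisely what is needed for $E'$ to compute $\cV_{R'}$ (without it one still gets a relation, but not with the correct ambient space), and confirming that the proof of \Cref{t_extensions-scalars-weak-reg} survives the extra tensor with $k$ — which it should, since flatness of $Q'$ over $Q$ and of $\S'$ over $\S$ is preserved, and the key identity $\Ext_\Phi(\alpha,\beta)\circ\chi_i' = \chi_i\circ\Ext_\Phi(\alpha,\beta)$ is unaffected.
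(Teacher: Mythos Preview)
Your overall strategy---reduce to completions, set up the Koszul complexes $E$ and $E'$ over the regular rings $Q$ and $Q'$, and then invoke \Cref{t_extensions-scalars-weak-reg}---is exactly what the paper does. The paper's proof is a four-line chain of equalities: set $\V_R(M,N):=\V_E(\widehat M,\widehat N)$, apply \Cref{t_extensions-scalars-weak-reg}, then identify $\widehat M\otimes_Q Q'\cong \widehat M\otimes_{\widehat R}\widehat{R'}\cong\widehat{M\otimes_R R'}$ (the second isomorphism being a standard completion--base-change fact, for which the paper cites \cite[13.2.5]{Christensen/Foxby/Holm:2024}). You asserted this identification in your first paragraph without justification; that is the only place where a reference or a sentence of argument is actually required.

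The one genuine discrepancy is that you have misread the statement: the corollary is about $\V_R$ (support in $\spec\S$), not $\cV_R$ (support in $\spec\A$). Your entire third paragraph---tensoring down to the residue field and rerunning the faithfully-flat support argument over $\A$---is therefore unnecessary for this corollary. What you prove there is essentially the content of the subsequent \Cref{r_fiber-product}, where the paper does pass to $\cV$ and records $\cV_{E'}(M\otimes_Q Q',N\otimes_Q Q')\cong\cV_E(M,N)\times_{\spec k}\spec k'$. So your extra step is not wrong, just misplaced: stop after your displayed equation $\V_E(\widehat M,\widehat N)=\Psi^*(\V_{E'}(\widehat M\otimes_Q Q',\widehat N\otimes_Q Q'))$, add the completion--tensor identification, and you have the paper's proof.
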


\begin{proof}
The statement follows from the following chain of equalities:
\begin{align*}
\V_{R}(M,N) &:= \V_{\widehat{R}}(\widehat M, \widehat N) \\
&\cong \Psi^*(\V_{\widehat{R'}}(\widehat M \otimes_Q Q', \widehat N \otimes_Q Q')) \quad \text{by \Cref{t_extensions-scalars-weak-reg}} \\
&\cong \Psi^*(\V_{\widehat{R'}} (\widehat M \otimes_{\widehat{R}} \widehat{R'}, \widehat N \otimes_{\widehat{R}} \widehat{R'})) \quad \text{since } \widehat{R'} \cong \widehat{R} \otimes_Q Q'\\
&\cong \Psi^*(\V_{\widehat{R'}}(\widehat{M\otimes_RR'}, \widehat{N\otimes_RR'})) \quad \text{by \cite[13.2.5]{Christensen/Foxby/Holm:2024}} \\
&=: \Psi^*(\V_{R'}(M\otimes_R R', N \otimes_R R')). \qedhere
\end{align*}
\end{proof}

\begin{remark}\label{r_fiber-product}
Using the setup in \Cref{t_extensions-scalars-weak-reg}, $\V_{E'}(M\otimes_QQ',N \otimes_QQ')$ is isomorphic to the fiber product
\[
\V_{E}(M,N) \times_{\spec Q} \spec Q'.
\]
Indeed, we have the following isomorphisms:
\begin{align*}
\V_{E'}(M\otimes_Q Q', N \otimes_Q Q') &\cong \mathbf V(\ann_{\S'}\Ext_{E'}(M\otimes_QQ',N\otimes_QQ')) \\
&\cong \mathbf V(\ann_S \Ext_{E}(M,N)) \otimes_QQ') \\ 
&\cong \V_{E}(M,N) \times_{\spec Q}  \spec Q'. 
\end{align*}
The first and third isomorphisms are by definition and the second isomorphism comes from the following argument. Since $Q\to Q'$ is flat and $\Ext_E(M,N)$ is a finitely generated $\S$-module, we have an isomorphism of graded $\S'$-modules,
\[
(\ann_{\S}\Ext_E(M,N) )\otimes_QQ' \cong \ann_{\S'}(\Ext_{E}(M,N)\otimes_Q Q').
\]
By the proof of \Cref{t_extensions-scalars-weak-reg} we have an isomorphism of graded $\S'$-modules,
\[
\Ext_E(M,N) \otimes_QQ' \cong \Ext_{E'}(M\otimes_QQ',N \otimes_QQ'),
\]
which proves the statement.

Moreover, since $\cV_{E}(M,N)$ is isomorphic to $\V_{E}(M,N) \times_{\spec Q} \spec k$, the following holds
\[
\cV_{E'}(M \otimes_Q Q', N \otimes_Q Q') \cong \cV_{E}(M,N)\times_{\spec k} \spec k'.
\]
\end{remark}

\begin{remark}\label{r_dim-equality}
The previous remark implies that for any weakly regular map of local rings, $\varphi \! : \! R \to R'$, we have $\dim \cV_R(M) = \dim \cV_{R'}(M\otimes_QQ')$. This follows from noether normalization since our varieties are affine $k$-schemes of finite type.
\end{remark}

\begin{chunk}\label{c_dim-equality-cohen}
Suppose $(R,\m,k)$, even though it is not complete, is a quotient of a regular local ring, say $R \cong Q/(f_1,\dotsc, f_n)$. If $E = \Kos^Q(f_1,\dotsc, f_n)$, then for any object $M \in D^f(E)$ \[
\cV_E(M) \cong \cV_R(M).
\]
This follows from \cite[6.1.2]{Pollitz:2021}, and the fact that completions are weakly regular, along with \Cref{r_fiber-product}. Indeed, importing notation from above, let $Q' = \widehat Q$ and  $E' = E \otimes_QQ'$.
\begin{align*}
\cV_R(M) &\cong \V_{E'}(\widehat M, \widehat M) \times_{\spec Q'} \spec k \\
&=(\V_E(M,M)\times_{\spec Q} \spec Q') \times_{\spec Q'} \spec k \\
&= \V_E(M,M) \times_{\spec Q}\spec k \\
&= \cV_E(M)
\end{align*}

Note that $\cV_R(M)$ is defined in terms of the completions $\widehat R$ and $\widehat M$, but this says that if your ring is already the quotient of a regular local ring, you don't need to complete.
\end{chunk}

\begin{chunk}\label{c_residual-algebraic-closure}
A \textit{residual algebraic closure} of a local ring $(Q,\m,k)$ is a flat extension of local rings $Q \to Q'$ such that $\m Q'$ is the maximal ideal $\m'$ of $Q'$ and the induced map $k \to Q'/\m'$ is the embedding of $k$ into its algebraic closure $\overline{k} \cong Q'/\m'$. Hence, residual algebraic closures are examples of weakly regular maps. Such extension always exist \cite[App., Theor\'eme 1, Corollaire]{Bourbaki:1983}.
\end{chunk}

\begin{chunk}
In \cite{Avramov/Buchweitz:2000b} and \cite{Jorgensen:2002}, their definitions of support varieties either assume you are working over an algebraically closed field or have included taking a residual algebraic closure into the definition. However, the definition given in \cite{Pollitz:2021} (and in this paper) does not make any assumptions on the residue field and this section tells one how the support variety changes when you take a residual algebraic closure.
\end{chunk}

\section{Restriction of Scalars}\label{s_Restriction of Scalars}

In this section, we will state and provide a proof to \Cref{theorem A}. We fix the following notation. If $\varphi\!:\!R\to S$ is a local map of local rings, consider the completion $\widehat{\varphi}\!:\!\widehat R \to \widehat S$. Suppose we have Cohen presentations $Q \twoheadrightarrow\widehat R$ and $Q' \twoheadrightarrow \widehat S$, and a local map $\tilde \varphi \colon Q \to Q'$ such that the following diagram commutes: 
\[
\begin{tikzcd}
Q \ar[r,"\tilde \varphi"] \ar[d, two heads] & Q' \ar[d, two heads] \\
\widehat R\ar[r,"\widehat \varphi"] & \widehat S.
\end{tikzcd}
\]
One can always find such a diagram by taking a Cohen presentation $Q \twoheadrightarrow \widehat R$ and taking a Cohen factorization of the map $Q \to \widehat S$.
If $\widehat R\cong Q/I$ and $\widehat S \cong Q'/J$ for some ideals $I\subseteq Q$ and $J\subseteq Q'$, there is an induced map $\cV_{\tilde \varphi}\!:\! I/\m_QI \to J/\m_{Q'}J$. Note that the map $\cV_{\tilde \varphi}$ depends on the choices of Cohen presentations and the lift of the map $\widehat \varphi$. However, if one chooses different lifts of the map $\widehat \varphi\!:\! \widehat R \to \widehat S$ to Cohen presentations, say $\tilde \varphi_1\!:\!P \to P'$ and $\tilde \varphi_2\!:\!Q \to Q'$, one can relate the two induced maps $\cV_{\tilde \varphi_1}$ and $\cV_{\tilde \varphi_2}$ in a way we make precise below.

\begin{lemma}\label{l_compatability-Cohen-pres}
Let $\varphi\!:\!R\to S$ be a local homomorphism between local rings with algebraically closed residue fields. Suppose we have two commutating diagrams with vertical arrows giving minimal Cohen presentations:
\[
\begin{tikzcd}
P\ar[r, "\tilde \varphi_1"] \ar[d, two heads] & P'\ar[d, two heads] \\
\widehat R\ar[r,"\widehat \varphi"] & \widehat S
\end{tikzcd} \quad \quad \quad
\begin{tikzcd}
Q \ar[r, "\tilde \varphi_2"] \ar[d, two heads] & Q' \ar[d, two heads] \\
\widehat R \ar[r, "\widehat \varphi"] & \widehat S.
\end{tikzcd}
\]
Suppose $I_1,J_1,I_2,$ and $J_2$ are ideals such that 
\[
\widehat R \cong P/ I_1, \quad \widehat S \cong P'/J_1 \quad \text{ and } \quad \widehat R \cong Q/I_2, \quad \widehat S \cong Q'/J_2.   
\]
There exist isomorphisms of vector spaces,
\[
\varkappa\!:\!I_1/\m_PI_1 \to I_2/\m_QI_2, \quad \text{ and } \quad \varkappa'\!:\!J_1/\m_{P'}J_1 \to J_2/\m_{Q'}J_2,
\]
such that for any $M$ an object of $D^f(R)$ and $N$ an object of $D^f(S)$, $\varkappa$ maps $\cV_R(M)$ viewed inside of $I_1/\m_PI_1$ bijectively onto $\cV_R(M)$ viewed inside of $I_2/\m_QI_2$ and similarly for $\varkappa'$ with $\cV_S(N)$. Moreover, $\varkappa'\circ\cV_{\tilde \varphi_1} \circ \varkappa^{-1} = \cV_{\tilde \varphi_2}$.
\end{lemma}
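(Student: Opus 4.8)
The plan is to reduce the comparison of two minimal Cohen presentations to a single zigzag through a common refinement, and then to invoke the weakly regular theory developed in Section~\ref{s_ext-scal-weak-reg} at each step. Concretely, given the two lifts $\tilde\varphi_1\colon P\to P'$ and $\tilde\varphi_2\colon Q\to Q'$, I would first form a commutative cube whose bottom face is the square with the fixed map $\widehat\varphi\colon\widehat R\to\widehat S$: we want regular local rings $\widetilde P$ and $\widetilde P'$ with weakly regular maps $P\to\widetilde P\leftarrow Q$ and $P'\to\widetilde P'\leftarrow Q'$, a lift $\tilde\varphi_3\colon\widetilde P\to\widetilde P'$, and surjections $\widetilde P\twoheadrightarrow\widehat R$, $\widetilde P'\twoheadrightarrow\widehat S$ compatible with everything in sight. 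The standard device here (see \cite[1.5]{Avramov/Foxby/Herzog:1994} and \ref{c_lift-weakly-reg}) is to factor the composite $Q\to\widehat R$, and then $P\otimes_{\widehat R}\widehat R\to\cdots$; more precisely one takes $\widetilde P$ to be a Cohen factorization of $P\to\widehat R$ refined through $Q$, using that a tensor/Cohen-factorization construction produces a regular local ring receiving weakly regular maps from both $P$ and $Q$ and surjecting onto $\widehat R$. The key technical point is that because both presentations are \emph{minimal}, by \cite[1.6]{Avramov/Foxby/Herzog:1994} the refined presentation $\widetilde P\twoheadrightarrow\widehat R$ is again minimal, and the kernels satisfy $\widetilde I = I_1\widetilde P = I_2\widetilde P$ with minimal generators carried to minimal generators; hence the induced maps on $I_\bullet/\m I_\bullet$ are honest isomorphisms.

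With the cube in hand, I would define $\varkappa$ as the composite $I_1/\m_P I_1 \xrightarrow{\ \cong\ } \widetilde I/\m_{\widetilde P}\widetilde I \xleftarrow{\ \cong\ } I_2/\m_Q I_2$ of the two base-change isomorphisms coming from the weakly regular maps $P\to\widetilde P$ and $Q\to\widetilde P$, and symmetrically $\varkappa'$ from $P'\to\widetilde P'\leftarrow Q'$. That these are isomorphisms is exactly the minimality statement above; that they carry $\cV_R(M)$ to $\cV_R(M)$ on the respective ambient spaces is precisely \Cref{r_fiber-product} (and \Cref{r_dim-equality}), applied to the completed module $\widehat M$: base change along a weakly regular (here, even flat local with $\m$-generated fiber) map transports $\cV_E(\widehat M)$ to $\cV_{E'}(\widehat M\otimes_Q Q')$, and when the residue field is already algebraically closed the map $k\to k'$ on residue fields is an isomorphism, so the fiber-product base change $\cV_E(\widehat M)\times_{\Spec k}\Spec k'$ is literally $\cV_E(\widehat M)$ under the identification of ambient spaces. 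One subtlety worth spelling out: the maps $P\to\widetilde P$ etc.\ need not have the \emph{same} residue field unless we are careful, but since all residue fields here equal the algebraically closed $k$ (resp.\ the residue field of $\widehat S$), this is automatic; this is where the hypothesis ``algebraically closed residue fields'' is used.

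For the compatibility $\varkappa'\circ\cV_{\tilde\varphi_1}\circ\varkappa^{-1}=\cV_{\tilde\varphi_2}$, I would argue that the whole cube commutes at the level of the induced maps on conormal modules. The map $\cV_{\tilde\varphi_i}\colon I_i/\m I_i\to J_i/\m J_i$ is induced by $\tilde\varphi_i$ sending $I_i$ into $J_i$; after base change to $\widetilde P\to\widetilde P'$ via $\tilde\varphi_3$, both $\cV_{\tilde\varphi_1}$ and $\cV_{\tilde\varphi_2}$ become the single map $\widetilde I/\m\widetilde I\to\widetilde J/\m\widetilde J$ induced by $\tilde\varphi_3$, because the squares $P\to\widetilde P$, $P'\to\widetilde P'$ (and the analogous ones with $Q$) commute with the $\tilde\varphi$'s by construction of the cube. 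Chasing this square of conormal maps gives the identity. The main obstacle I anticipate is the clean construction of the refining cube: one must produce $\widetilde P,\widetilde P'$ and all four weakly regular maps simultaneously commuting with the $\tilde\varphi_i$ and the surjections onto $\widehat R,\widehat S$, and verify minimality is preserved throughout. I would handle this by first refining the left column (building $\widetilde P$ from $P$, $Q$, and $\widehat R$ via a Cohen-factorization of $P\otimes_{\text{(base)}}Q\to\widehat R$, or more simply by applying \ref{c_lift-weakly-reg} twice), then pushing out along $\tilde\varphi_1$ to get a candidate for $\widetilde P'$, and finally refining that candidate once more so that it also receives $Q'$ — invoking \cite[1.5]{Avramov/Foxby/Herzog:1994} at each stage to keep everything weakly regular and minimal. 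Everything after the cube is a diagram chase plus a direct appeal to \Cref{r_fiber-product}.
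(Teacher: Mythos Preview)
Your strategy diverges from the paper's in a way that creates a real obstruction. The paper compares the two presentations by passing \emph{upward} to a common roof: by \cite[4.1.4]{Pollitz:2021} there exist complete regular local rings $O,O'$ with surjections $O\twoheadrightarrow P$, $O\twoheadrightarrow Q$ and $O'\twoheadrightarrow P'$, $O'\twoheadrightarrow Q'$ (essentially presenting the fibered products $P\times_{\widehat R}Q$ and $P'\times_{\widehat S}Q'$), together with a compatible lift $O\to O'$. The isomorphisms $\varkappa,\varkappa'$ are then obtained from \cite[5.9]{Avramov/Buchweitz:2000b}, which identifies each $I_i/\m I_i$ with $I_3\cap\m_O^2/\m_O I_3$; the transport of support varieties is \cite[5.3]{Avramov/Buchweitz:2000b}; and the compatibility $\varkappa'\circ\cV_{\tilde\varphi_1}\circ\varkappa^{-1}=\cV_{\tilde\varphi_2}$ falls out of the commuting cube because both sides factor through the single map $I_3\cap\m_O^2/\m_O I_3\to J_3\cap\m_{O'}^2/\m_{O'}J_3$.

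Your proposal instead goes \emph{downward} to a common refinement $P\to\widetilde P\leftarrow Q$ with both arrows weakly regular, and here the construction breaks. Any $\widetilde P$ receiving genuinely weakly regular maps from both $P$ and $Q$ (for instance a completed tensor product over a coefficient ring) will have strictly larger dimension than $\dim P=\dim Q=\operatorname{edim}\widehat R$, so $\widetilde P\twoheadrightarrow\widehat R$ cannot be minimal; the appeal to \cite[1.6]{Avramov/Foxby/Herzog:1994} is misplaced, since that result transfers minimality along a weakly regular map on the \emph{quotient}, not along a comparison of two presentations of the same ring. More seriously, the kernel $\widetilde I$ of $\widetilde P\twoheadrightarrow\widehat R$ is not $I_1\widetilde P$ (nor $I_2\widetilde P$): one has $\widetilde P/I_1\widetilde P\cong \widehat R\otimes_P\widetilde P$, which is larger than $\widehat R$ whenever $P\to\widetilde P$ is not an isomorphism. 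Thus the maps $I_i/\m I_i\to\widetilde I/\m_{\widetilde P}\widetilde I$ are not the isomorphisms you claim, \Cref{r_fiber-product} only lets you compare $\cV_E$ with $\cV_{E\otimes_P\widetilde P}$ (Koszul on $I_1\widetilde P$, not on $\widetilde I$), and you are left needing to compare two \emph{different} Koszul complexes over $\widetilde P$ --- which is exactly the problem you started with. The roof construction avoids this because under a surjection of regular local rings the comparison of conormal spaces is already handled by the Avramov--Buchweitz results cited above.
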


\begin{proof}
By \cite[4.1.4]{Pollitz:2021}, there exist surjections of complete regular local local rings
\[
\begin{tikzcd}
&O\ar[dr, two heads] \ar[dl, two heads]& \\
P&&Q
\end{tikzcd} \quad \text{ and } \quad
\begin{tikzcd}
&O'\ar[dr, two heads] \ar[dl, two heads]&\\
P'&&Q'.
\end{tikzcd}
\]
Note that $O$ and $O'$ present the pullbacks $P\times_{\widehat{R}} Q$ and $P' \times_{\widehat S}Q'$ respectively so we may choose $O$ and $O'$ such that there exists a lift of the natural map between the pullbacks, $P\times_{\widehat R}Q \to P' \times_{\widehat S}Q'$.
Since $P,Q,P',$ and $Q'$ present minimal Cohen presentations of $R$ and $S$, by \cite[5.9.1]{Avramov/Buchweitz:2000b} we can verify the existence of the maps 
\[
\varkappa:I_1/\m_PI_1 \to I_2/\m_QI_2 \quad  \text{ and } \quad \varkappa':J_1/\m_{P'}J_1 \to J_2/\m_{Q'}J_2,
\] 
and that they are isomorphisms of vector spaces. It follows by \cite[5.3]{Avramov/Buchweitz:2000b}, that the statements about $\varkappa$ and $\varkappa'$ mapping the cohomological support varieties bijectively onto one another holds. Note that the statement in \textit{loc.cit.\!} is only for modules over a local complete intersection ring, but the same proof, \textit{mutatis mutandis}, holds for bounded complexes consisting of finitely generated modules over any noetherian local ring. Hence, the last thing we need to show is,
\[
\varkappa' \circ \cV_{\tilde \varphi_1} \circ \varkappa^{-1} = \cV_{\tilde \varphi_2}.
\]
Consider the following commutative diagram:
\[
\begin{tikzcd}
&&&& O' \arrow[ld, two heads] \arrow[rd, two heads]&\\
& O \arrow[rrru] \arrow[ld, two heads]  && P' \arrow[rd, two heads] && Q' \arrow[ld, two heads]  \\
P \arrow[rrru,] \arrow[rd, two heads] && Q \arrow[rrru, crossing over] \arrow[ld, two heads] \arrow[lu, twoheadleftarrow, crossing over] && \widehat S & \\
& \widehat R \arrow[rrru]&& &&.              
\end{tikzcd}
\]
Let $I_3 \subseteq O$ and $J_3 \subseteq O'$ be ideals such that $\widehat R \cong O/I_3$ and $\widehat S \cong O'/J_3$. By construction, the map $\varkappa\!:\!I_1/\m_pI_1 \to I_2/\m_QI_2$ factors through $I_3\cap \m_O^2 /I_3\m_O$ and similarly, $\varkappa'$ factors through $J_3 \cap \m_{O'}^2/\m_{O'}J_3$ \cite[5.9]{Avramov/Buchweitz:2000b}. Hence we have the following diagram which commutes as each map is induced by the commutative diagram above:
\[
\begin{tikzcd}                                                                       && J_3\cap \m_{O'}^2 / \m_{O'} J_3 \ar[<->, ldd] \ar[<->, rdd, shift right] &\\
& I_3\cap \m_O^2 / \m_O I_3 \ar[ru] \ar[<->,ldd]  &&\\
& J_1/\m_{P'}J_1 \ar[<->, rr, "\varkappa'"] && J_2/\m_{Q'} J_2. \\
I_1/ \m_PI_1 \ar[ru, "\cV_{\tilde \varphi_1}",swap] \ar[<->,rr, "\varkappa"] && I_2/ \m_QI_2 \ar[ru, "\cV_{\tilde \varphi_2}",swap] \ar[<->,luu, crossing over]&       
\end{tikzcd}
\]
In particular, the bottom square commutes so the following equality holds
\[
\varkappa' \circ \cV_{\tilde \varphi_1} \circ \varkappa^{-1} = \cV_{\tilde \varphi_2}. \qedhere
\]
\end{proof}

\begin{Notation}
The previous lemma says if we choose different lifts of the map $\widehat \varphi\!:\! \widehat R \to \widehat S$ to maps between minimal Cohen presentations of $\widehat R$ and $\widehat S$, say $\tilde \varphi_1\!:\!P \to P'$ and $\tilde \varphi_2\!:\! Q\to Q'$, then for any object $N$ of $D^f(S/R)$, whether we view $\cV_S(N)$ in the affine space corresponding to the presentation $P' \twoheadrightarrow \widehat S$, or that of $Q' \twoheadrightarrow \widehat S$, we have 
\[
\cV_{\tilde \varphi_1}(\cV_S(N)) \cong \cV_{\tilde \varphi_2}(\cV_S(N)).
\]
In light of this, for the rest of the paper we write $\cV_\varphi$ instead of $\cV_{\tilde \varphi}$ and won't make a distinction as to what lift of $\varphi$ is being taken.
\end{Notation}

\begin{chunk}
The proof of \Cref{theorem A} will be contingent on two key lemmas. We first show a version of \Cref{theorem A} for local surjections, and we then show a version of \Cref{theorem A} for weakly regular maps. Using \Cref{s_reg-fac} we can piece together a proof of \Cref{theorem A} from those two lemmas. The proofs of them heavily rely on \Cref{c_intermediate-hyperplane}. For the convenience of the reader we restate this fact.

Let $(Q, \m,k)$ be a regular local ring with algebraically closed residue field and $I$ an ideal of $Q$ such that $I$ is minimally generated  by elements $f_1,\dotsc, f_n \in \m^2$. Viewing $\cV_E(M)$ as an affine variety in $I/\m I \cong \mathbb A^n_k$, we have
\[
\cV_E(M) = \{\overline f\in I/\m I\mid \Ext_{Q/(f)}(M,k) \text{ is unbounded}\} \cup \{0\}.
\]
\end{chunk}

\begin{lemma}\label{l_restriction-surjections}
Let $\varphi\!: \!R \to S$ be a local surjection of local rings with algebraically closed residue field $k$. Then for any object $M$ in $D^f(S)$, we have
\[
\cV_R(M) = \cV_{\varphi}^{-1}( \cV_S(M))
\]
at the level of affine varieties.
\end{lemma}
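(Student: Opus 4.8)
The plan is to reduce to the case where the rings are already quotients of regular local rings, complete the setup, and then work with intermediate hypersurfaces via \Cref{c_intermediate-hyperplane}. First I would reduce to the complete case: by \Cref{c_dim-equality-cohen} and the compatibility of support varieties with completion, together with \Cref{l_compatability-Cohen-pres} which lets us disregard the choice of Cohen presentation, it suffices to prove the statement for a surjection $\widehat R \to \widehat S$ with chosen minimal Cohen presentations fitting in the square
\[
\begin{tikzcd}
Q \ar[r,"\tilde \varphi"] \ar[d, two heads] & Q' \ar[d, two heads] \\
\widehat R\ar[r,"\widehat \varphi"] & \widehat S.
\end{tikzcd}
\]
Write $\widehat R \cong Q/I$ with $I$ minimally generated by $f_1,\dots,f_n \in \m_Q^2$, and $\widehat S \cong Q'/J$ with $J$ minimally generated by $g_1,\dots,g_m\in\m_{Q'}^2$. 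The map $\cV_{\tilde\varphi}\colon I/\m_Q I \to J/\m_{Q'}J$ is the $k$-linear map induced by $\tilde\varphi$ (note $\tilde\varphi(I)\subseteq J$ since the square commutes, and $\tilde\varphi(\m_Q)\subseteq\m_{Q'}$).

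**The hypersurface criterion.**
The heart of the argument is the pointwise description in \Cref{c_intermediate-hyperplane}. Fix $\overline f \in I/\m_Q I$ with a lift $f\in Q$, and let $\overline g = \cV_{\tilde\varphi}(\overline f) \in J/\m_{Q'}J$ with lift $g = \tilde\varphi(f) \in Q'$. By \Cref{c_intermediate-hyperplane},
\[
\overline f \in \cV_R(M) \iff \overline f = 0 \text{ or } \Ext_{Q/(f)}(M,k) \text{ is unbounded},
\]
and likewise $\overline g \in \cV_S(M)$ iff $\overline g = 0$ or $\Ext_{Q'/(g)}(M,k)$ is unbounded. Now $M$ is a complex of $S$-modules, hence of $\widehat S = Q'/J$-modules, and since $\tilde\varphi$ induces a ring map $Q/(f) \to Q'/(g)$ realizing $Q'/(g)$ as... — here I need to be careful. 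The key observation is that restriction of scalars along $Q/(f)\to Q'/(g)$ does not change the relevant $\Ext$ modules because $M$ already lives over the \emph{quotient} $Q'/J$, which factors through $Q'/(g)$; more precisely, $\Ext_{Q/(f)}(M,k)$ and $\Ext_{Q'/(g)}(M,k)$ should be compared using that $Q/(f) \to Q'/(g)$ has the same closed fiber behavior modulo $J$. I would make this precise by noting $Q'/(g) \cong (Q/(f))\otimes_? (\text{something})$ — but $\tilde\varphi$ need not be flat, so instead I would invoke the change-of-rings relating $\Ext$ over $Q/(f)$ and over $Q'/(g)$ for a module annihilated by (the image of) $J$, analogous to the computation $\Ext_{E_f}(M,k)\cong\Ext_{Q/(f)}(M,k)$ already cited, applied over both base rings and bridged by $\tilde\varphi$.

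**Comparing the two $\Ext$ algebras.**
Concretely, I expect the cleanest route is: $\Ext_{Q/(f)}(M,k)$ is finitely generated over the polynomial extension $k[\chi]$ (one cohomology operator for the single element $f$), and $\Ext_{Q/(f)}(M,k)$ is bounded iff $M$ is perfect over $Q/(f)$, iff (by the Koszul-complex reformulation $\Ext_{Q/(f)}(M,k)\cong \Ext_{E_f}(M,k)$ with $E_f=\Kos^Q(f)$) $M \in \thick_{D(E_f)}(Q/(f))$. Pulling back along $\Kos^{Q}(f)\to\Kos^{Q'}(g)$ — which exists and is a dg algebra map over $\tilde\varphi$ — and using that $M$ as an $E_f$-module \emph{is} the restriction of $M$ as an $E_g$-module, unboundedness of $\Ext_{E_f}(M,k)$ over $Q/(f)$ transfers to unboundedness of $\Ext_{E_g}(M,k)$ over $Q'/(g)$, and conversely, because the residue field $k$ is common and the cohomology operator maps to the cohomology operator. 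This is exactly the single-variable instance of \Cref{t_extensions-scalars-weak-reg}'s bijection argument, \emph{except} $\tilde\varphi$ is not assumed flat here; the saving grace is that we only need the qualitative statement ``bounded vs.\ unbounded,'' which is detected by whether $M$ restricted to $E_f$ (equivalently $E_g$) lies in the thick subcategory generated by $k$ — and this thick-subcategory membership for a fixed complex $M$ over the fixed dg algebra $\Kos^{Q}(f)$ is unaffected by which larger ring we regard $f$ or $g$ as living in.

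**Assembling the equality.**
Putting the pieces together: for $\overline f\neq 0$,
\[
\overline f\in\cV_R(M)\iff \Ext_{Q/(f)}(M,k)\text{ unbounded}\iff \Ext_{Q'/(g)}(M,k)\text{ unbounded}\iff \cV_{\tilde\varphi}(\overline f)\in\cV_S(M),
\]
where the middle equivalence is the comparison above; and $\overline f = 0$ lies in both $\cV_R(M)$ and $\cV_{\tilde\varphi}^{-1}(\cV_S(M))$ automatically since $\cV_{\tilde\varphi}(0)=0\in\cV_S(M)$. Hence $\overline f\in\cV_R(M)\iff\overline f\in\cV_{\tilde\varphi}^{-1}(\cV_S(M))$, which is the desired equality $\cV_R(M)=\cV_{\tilde\varphi}^{-1}(\cV_S(M))$.

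**Main obstacle.**
The step I expect to be the crux — and where I would spend the most care — is the middle equivalence: showing $\Ext_{Q/(f)}(M,k)$ is unbounded if and only if $\Ext_{Q'/(g)}(M,k)$ is, given only that $\tilde\varphi\colon Q\to Q'$ is a local map (not flat) taking $f$ to $g$, with $M$ a complex over $Q'/J$. Unlike the weakly regular situation of \Cref{t_extensions-scalars-weak-reg}, there is no faithfully flat descent to lean on. I believe the right framework is to phrase both conditions intrinsically in terms of $M$ as a dg module over $\Kos^Q(f)$ (using $\Ext_{Q/(f)}(M,k)\cong\Ext_{\Kos^Q(f)}(M,k)$ and that the $\Kos^{Q'}(g)$-module structure on $M$ restricts to its $\Kos^Q(f)$-module structure along the dg map $\Kos^Q(f)\to\Kos^{Q'}(g)$), so that ``$M$ restricted to $\Kos^Q(f)$ is perfect'' is the single common condition governing both sides; then the equivalence is a formality. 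Verifying that this restriction compatibility holds on the nose — i.e.\ that the $E_f$-action on $M$ coming through $Q/(f)\hookrightarrow$ the Koszul complex agrees with the one obtained by restricting the $E_g$-action — is the technical heart, and will likely require spelling out the dg algebra map $\Kos^Q(f)\to\Kos^{Q'}(g)$ induced by $\tilde\varphi$ sending the degree-$1$ generator to the degree-$1$ generator, exactly as in the proof of \Cref{t_extensions-scalars-weak-reg}.
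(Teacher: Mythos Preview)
Your reductions and your use of the hypersurface description \Cref{c_intermediate-hyperplane} match the paper, and you have correctly isolated the crux as the equivalence
\[
\Ext_{Q/(f)}(M,k)\text{ unbounded} \iff \Ext_{Q'/(g)}(M,k)\text{ unbounded}
\]
for $g=\tilde\varphi(f)$. But your proposed resolution of this equivalence is where the argument breaks. The assertion that ``$M$ restricted to $\Kos^Q(f)$ is perfect'' is a \emph{single common condition governing both sides} is false: boundedness of $\Ext_{Q/(f)}(M,k)$ is perfectness of $M$ over $Q/(f)$, whereas boundedness of $\Ext_{Q'/(g)}(M,k)$ is perfectness of $M$ over $Q'/(g)$, and these are different conditions on $M$. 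Restriction along the dg map $\Kos^Q(f)\to\Kos^{Q'}(g)$ neither preserves nor reflects perfectness in general (for instance $Q'/(g)$ is perfect over itself but need not be perfect over $Q/(f)$), so the equivalence is not a formality once the dg framework is set up. Your aside about the ``thick subcategory generated by $k$'' is also not the right condition; the relevant thick subcategory is that generated by the ring.

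The paper closes this gap by making a \emph{specific choice} of lift: one constructs $Q'=Q/L$ with $L=(\ell_1,\dots,\ell_{r-s})$ generated by a regular sequence in $\m_Q\setminus\m_Q^2$, so that $\tilde\varphi\colon Q\to Q'$ is the quotient surjection and $L\cap\m_Q^2=\m_Q L$. For $\tilde\varphi(f)\neq 0$, a result of Pollitz--\cb{S}ega then compares $\Ext_{Q/(f)}(M,k)$ with $\Ext$ over $\Kos^{Q/(f)}(\ell_1,\dots,\ell_{r-s})$, and since $f,\ell_1,\dots,\ell_{r-s}$ is a regular sequence the latter agrees with $\Ext_{Q'/(\tilde\varphi(f))}(M,k)$. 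You also do not treat the case $\overline f\neq 0$ but $\tilde\varphi(f)=0$ (so $\cV_\varphi(\overline f)=0$), which requires a separate argument: here $f\in L\cap\m_Q^2=\m_Q L\subseteq \m_Q\ann_Q(M)$, which forces $\Ext_{Q/(f)}(M,k)$ to be unbounded. Without the explicit construction of $\tilde\varphi$ as a quotient by such an $L$, none of these tools are available and your middle equivalence remains unproved.
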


\begin{proof}
Since the completion of a surjective local homomorphism is still surjective, we may assume that $R$ and $S$ are already complete. Let $(Q,\m_Q,k)$ be a regular local ring such that $R\cong Q/I $ is a minimal Cohen presentation. By \Cref{l_compatability-Cohen-pres}, we may consider a specific minimal Cohen presentation of $S$ which we construct in the following way.  Let $\gamma\!:\!Q \!\to S$ be the local surjection given by the composition of the map of $Q$ onto $R$ and $\varphi\!:\!R \to S$. Let $r = \text{edim}(Q)$ and $s = \text{edim}(S)$. Since $Q$ surjects onto $S$ via a local homomorphism, we can choose $s$ minimal generators of $\m_Q$ that map onto a minimal generating set of $\m_S$. Denote these $s$ elements of $Q$ by $x_1,\dotsc, x_s$. We complete this to a minimal generating set of $\m_Q$, say $x_1, \dotsc,x_s,x_{s+1},\dotsc, x_r$. There exist $\overline a_{ij} \in S$ such that
\[
\gamma(x_{s+i}) = \sum_{j=1}^s \overline a_{ij}\gamma(x_j)
\]
for all $1\les i \les r-s$. Let $a_{ij}$ be lifts of $\overline a_{ij}$ to $Q$ and define
\[
\ell_i = x_{s+i} - \sum_{j=1}^s a_{ij}x_j \ \text{ for } \ 1 \les i \les r-s, \quad \text{and} \quad Q' = \frac{Q}{(\ell_1,\dotsc,\ell_{r-s})}.
\]
Note that each $\ell_i$ is in $\mathfrak m_Q \setminus\m_Q^2$ and $\ell_1, \dotsc, \ell_{s-r}$ form a regular sequence. Thus, $Q'$ is a regular local ring with embedding dimension $s$, which is the embedding dimension of $S$. Since $\gamma(\ell_i) =0$ for all $i$, we have that $\gamma$ factors though $Q'$. Thus we have the following commutative diagram of local rings,
\[
\begin{tikzcd}
Q\ar[rr, two heads, "\tilde \varphi"] \ar[dd, two heads, swap,"\pi_R"] && Q' \ar[dd, two heads, "\pi_S"] \\ \\
R \ar[rr, two heads, "\varphi"] && S
\end{tikzcd}
\]
where the vertical maps give minimal Cohen presentations. \newline

Let $J = \ker(\pi_S)$ and $L =(\ell_1, \dotsc, \ell_{r-s})$. Let $M$ be an object of $D^f(S)$. Let $f \in Q$ be a minimal generator of $I$. That is, $\overline f \not = 0$ in $I/\m I$. First assume that $\tilde \varphi(f) \neq 0$. Since $L \cap \m_Q^2 = \mathfrak m_QL$, by \cite[2.2] {Pollitz/Sega:2025}, we have that $\Ext_{Q/(f)}(M,k)$ is unbounded if and only if $\Ext_E(M,k)$ is unbounded, where $E= \Kos^{Q/(f)}(\ell_1, \dotsc, \ell_{r-s})$. But $\ell_1,\dotsc, \ell_{r-s},f$ form a regular sequence in $Q$, so by \cite[6.1]{Felix/Halperin/Thomas:2001}, $\Ext_E(M,k) \cong \Ext_{Q'/(\tilde \varphi(f))}(M,k)$. Thus we have that 
$\Ext_{Q/(f)}(M,k)$ is unbounded if and only if $\Ext_{Q'/(\tilde \varphi(f))}(M,k)$ is unbounded. Hence,
\[
\overline{f} \in \cV_R(M) \iff \cV_{\varphi}(\overline f) \in \cV_S(M)
\]
when $f$ is a minimal generator of $I$ and $\tilde \varphi(f) \neq 0$.

Now, assume that $\tilde \varphi(f) = 0$, so $f \in L$. Since $f$ is in $\m_Q^2$, this says that $f \in \mathfrak m L \subseteq \m \ann_Q(M)$. Hence $\Ext_{Q/(f)}(M,k)$ is unbounded by \textit{loc.cit.\!} and \cite[2.2]{Pollitz/Sega:2025}. Thus, $\overline f \in \cV_R(M)$ which shows that for any nonzero $\overline f \in I/\m I$,
\[
\overline f \in \cV_R(M) \iff \cV_{\varphi}(\overline f) \in \cV_S(M).
\]

Lastly if $\overline f = 0$, then $\cV_{\varphi}(\overline f) =0$ as well, and thus the proof is complete.
\end{proof}

\begin{lemma}\label{l_restriction-weak-reg}
Let $\varphi\!:\!R \to S$ be a weakly regular morphism of local rings with algebraically closed residue fields. Let $M$ be an object of $D^f(S/R)$. Then,
\[
\cV_R(M) = \cV_{\varphi}^{-1}(\cV_S(M))
\]
at the level of affine varieties.
\end{lemma}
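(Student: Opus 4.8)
The plan is to reduce to the setting of \Cref{t_extensions-scalars-weak-reg} by passing to completions and choosing compatible minimal Cohen presentations. First I would complete the weakly regular map $\varphi\!:\!R \to S$ to get $\widehat\varphi\!:\!\widehat R \to \widehat S$, which is again weakly regular. By \Cref{c_lift-weakly-reg} (and the paragraph following it in \Cref{s_reg-fac}), I can choose a minimal Cohen presentation $\pi\!:\!Q \twoheadrightarrow \widehat R$ with $Q$ regular, say $\widehat R \cong Q/I$ with $I$ minimally generated by $f_1,\dotsc,f_n \in \m_Q^2$, and then lift $\widehat\varphi$ to a weakly regular map $\tilde\varphi\!:\!Q \to Q'$ where $\pi'\!:\!Q' \twoheadrightarrow \widehat S$ is again a \emph{minimal} Cohen presentation and $\widehat S \cong \widehat R \otimes_Q Q'$. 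In particular $J := IQ' = (\tilde\varphi(f_1),\dotsc,\tilde\varphi(f_n))$, and since $\tilde\varphi$ is flat local hence faithfully flat and $Q'$ is regular, the images $\tilde\varphi(f_1),\dotsc,\tilde\varphi(f_n)$ \emph{minimally} generate $J$ (the minimality of $\pi'$ forces $n = \dim Q' - \dim \widehat S = \dim Q - \dim\widehat R$, matching the count). By \Cref{l_compatability-Cohen-pres}, it suffices to prove the statement for this particular choice of lift, so the induced map $\cV_\varphi$ may be taken to be $\cV_{\tilde\varphi}\!:\!I/\m_Q I \to J/\m_{Q'}J$, which under these identifications is exactly the base-change isomorphism $I/\m_Q I \xrightarrow{\ \sim\ } (I/\m_Q I)\otimes_k k' \cong J/\m_{Q'}J$ composed with nothing — wait, more precisely it is the map $\overline f_i \mapsto \overline{\tilde\varphi(f_i)}$, which by flatness is injective, and it becomes an isomorphism after the residue-field extension $k \hookrightarrow k'$ is accounted for.

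Next I would invoke \Cref{t_extensions-scalars-weak-reg} with $M$ in the role of both arguments: since $M \in D^f(S/R)$ means $M \in D^f(\widehat S)$ restricts into $D^f(\widehat R) = D^f(E)$ where $E = \Kos^Q(f_1,\dotsc,f_n)$, and $E' := \Kos^{Q'}(\tilde\varphi(f_1),\dotsc,\tilde\varphi(f_n)) \cong E\otimes_Q Q'$, the theorem gives
\[
\V_E(M,M) = \Psi^*\bigl(\V_{E'}(M\otimes_Q Q', M\otimes_Q Q')\bigr),
\]
where $\Psi\colon \S \to \S'$ sends $\chi_i \mapsto \chi_i'$. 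By \Cref{r_fiber-product} this says $\V_{E'}(M\otimes_Q Q') \cong \V_E(M)\times_{\Spec Q}\Spec Q'$, and passing to residue fields (the ``Moreover'' in \Cref{r_fiber-product}) gives $\cV_{E'}(M\otimes_Q Q') \cong \cV_E(M)\times_{\Spec k}\Spec k'$. Now $\cV_R(M) = \cV_E(M)$ and $\cV_S(M) = \cV_{E'}(M\otimes_Q Q')$ by \Cref{c_SV-rings} together with \Cref{c_dim-equality-cohen} (these rings need not be complete a priori, but completing is harmless); so the displayed fiber-product identity, read along the closed-point fibers, says precisely that $\cV_R(M)$ is the preimage of $\cV_S(M)$ under the base-change map $\spec \A \to \spec \A'$ induced by $\A = \S\otimes_Q k \to \S'\otimes_{Q'}k' = \A'$ — which is the map on ambient affine spaces induced by $\cV_{\tilde\varphi}$.

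The one point requiring care — and what I expect to be the main obstacle — is bookkeeping the residue fields and the ambient spaces. The hypothesis says $R$ and $S$ have algebraically closed residue fields, so $k$ and $k'$ are \emph{both} algebraically closed; but a weakly regular map need not induce an isomorphism on residue fields, so a priori $k \subsetneq k'$. I would argue that since $\cV_R(M) \subseteq I/\m_Q I$ is a conical variety \emph{defined over the algebraically closed field $k$}, and $\cV_S(M) \subseteq J/\m_{Q'}J \cong (I/\m_Q I)\otimes_k k'$ is its base change to $k'$, the map $\cV_{\tilde\varphi}$ identifies $I/\m_Q I$ with the $k$-points of $J/\m_{Q'}J$, and ``$\cV_R(M) = \cV_{\tilde\varphi}^{-1}(\cV_S(M))$ at the level of affine varieties'' is the tautology that a $k$-variety and its $k'$-base change have the same $k$-points — i.e. $\cV_S(M)\cap (I/\m_Q I) = \cV_R(M)$. (This is the same phenomenon already used implicitly in \Cref{r_dim-equality} and \Cref{c_dim-equality-cohen}.) Concretely: given $\overline f \in I/\m_Q I$ nonzero, $\overline f \in \cV_R(M)$ iff $\Ext_{Q/(f)}(M,k)$ is unbounded iff (by \Cref{c_tensor-qis} / flat base change along $Q/(f)\to Q'/(\tilde\varphi f)$, using $M \in D^f$) $\Ext_{Q'/(\tilde\varphi f)}(M,k')$ is unbounded iff $\cV_{\tilde\varphi}(\overline f)\in \cV_S(M)$; and $\overline f = 0 \mapsto 0$. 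Assembling these equivalences via \Cref{c_intermediate-hyperplane} completes the proof.
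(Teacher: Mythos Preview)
Your primary route through \Cref{t_extensions-scalars-weak-reg} has a genuine gap. That theorem computes $\cV_{E'}(M\otimes_Q Q')$ from $\cV_E(M)$, but what you need is $\cV_{E'}(M)$, and these are different objects. The complex $M$ is already a $Q'$-complex (via $Q'\to\widehat S$), so $M\otimes_Q Q'$ carries a second $Q'$-action and is not quasi-isomorphic to $M$ over $E'$; concretely $M\otimes_Q Q'\simeq M\lotimes_{\widehat R}\widehat S$, which differs from $M$ whenever $\widehat S\lotimes_{\widehat R}\widehat S\not\simeq\widehat S$. Your claim ``$\cV_S(M)=\cV_{E'}(M\otimes_Q Q')$'' is therefore unjustified---neither \Cref{c_SV-rings} nor \Cref{c_dim-equality-cohen} says this---and the identity you would then need, namely $\cV_{E'}(M)\cong\cV_E(M)\times_{\Spec k}\Spec k'$, is exactly the content of the lemma you are proving. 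The machinery of \Cref{s_ext-scal-weak-reg} tracks the functor $M\mapsto M\otimes_Q Q'$ (extension of scalars), whereas this lemma is about restriction of an $S$-complex to $R$; these are different functors and \Cref{t_extensions-scalars-weak-reg} simply does not apply.

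Your ``concretely'' paragraph drops that route and is essentially the paper's argument via \Cref{c_intermediate-hyperplane}, but the key step is under-justified. The biconditional ``$\Ext_{Q/(f)}(M,k)$ unbounded $\Leftrightarrow$ $\Ext_{Q'/(\tilde\varphi f)}(M,k')$ unbounded'' is not a formal flat-base-change statement: base change for $\Ext$ compares $\Ext_A(M,N)$ with $\Ext_B(M\otimes_A B,N\otimes_A B)$, whereas here $M$ is fixed and not base-changed. The paper handles the two directions separately and with real content: finiteness of the projective dimension of $M$ over $Q'/(\tilde\varphi f)$ descends along the flat map $Q/(f)\to Q'/(\tilde\varphi f)$, while for the converse one needs the \emph{weakly regular} hypothesis (flatness alone is insufficient) via a Tor-finiteness result of Avramov--Foxby--Halperin, followed by a Tor-to-Ext conversion. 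Your observation that $\cV_\varphi$ is injective is correct---and incidentally shows the paper's case ``$\tilde\varphi(f)$ not a minimal generator of $J$'' is vacuous---but the substantive equivalence still requires those two ingredients.
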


\begin{proof}
The completion of a weakly regular map is once again weakly regular so we may assume that $R$ and $S$ are complete. Let $(Q, \m, k)$ be a regular local ring such that $R \cong Q/I$ is a minimal Cohen presentation. By \Cref{l_compatability-Cohen-pres} we may take any minimal Cohen presentation of $S$ which we construct via \Cref{c_lift-weakly-reg}. Thus we have a commutative diagram,
\[
\begin{tikzcd}
Q\ar[rr,"\tilde \varphi"] \ar[dd, two heads, "\pi_R", swap] && Q' \ar[dd, two heads, "\pi_S"] \\ \\
R \ar[rr, "\varphi"] && S
\end{tikzcd}
\]
where $\tilde \varphi$ is weakly regular, $(Q', \m',k')$ is a complete regular local ring, and $S \cong Q' \otimes_Q R$. Since the map $Q\twoheadrightarrow R$ is a minimal Cohen factorization, so is $Q' \to S$. \newline

Let $J = IQ'$, so $S \cong Q'/J$. Let $M$ be an object in $D^f(S/R)$. Let $f$ be a minimal generator of $I$. We will first assume that $\tilde \varphi(f)$ is a minimal generator of $J$. If $\Ext_{Q'/\tilde \varphi(f)}(M,k')$ is bounded, then the projective dimension of $M$ over $Q'/(\tilde \varphi(f))$ is finite \cite[17.3.30]{Christensen/Foxby/Holm:2024} (see  \textit{loc.cit.\!} for a precise definition of projective dimension for complexes). Thus, since $Q/(f) \to Q'/(\tilde \varphi(f))$ is flat, by \cite[8.3.19]{Christensen/Foxby/Holm:2024}, the projective dimension of $M$ over $Q/(f)$ is finite, and thus $\Ext_{Q/(f)}(M,k)$ is bounded. Now, suppose that $\Ext_{Q/(f)}(M,k)$ is bounded. Note that $Q/(f) \to Q'/(\tilde \varphi(f))$ is weakly regular, so by \cite[2.7]{Avramov/Foxby/Halperin:1985} we have that $\Tor^{Q'/\tilde \varphi(f)}(M,k')$ is bounded. Note the theorem cited is only stated for modules, but the proof follows \textit{mutatis mutandis} for complexes. Thus, $\Ext_{Q'/(\tilde \varphi(f))}(M,k')$ is bounded by \cite[5.3.3]{Pollitz:2021} and \cite[6.1]{Felix/Halperin/Thomas:2001}. Hence, in the case where $f$ and $\tilde \varphi(f)$ are minimal generators of $I$ and $J$ respectively, $\Ext_{Q/(f)}(M,k)$ is unbounded if and only if $\Ext_{Q'/(\tilde\varphi (f))}(M,k')$ is unbounded. Thus,
\[
\overline{f} \in \cV_R(M) \iff \cV_{\varphi}(\overline f)\in\cV_S(M).
\]

Now, suppose that $\tilde \varphi(f)$ is not a minimal generator of $J$. We need to show $\overline{f} \in \cV_R(M)$. By \cite[2.2]{Pollitz/Sega:2025} and \cite[6.1]{Felix/Halperin/Thomas:2001}, since $\tilde\varphi(f) \in \m' J \subseteq \m' \ \ann_{Q'}(M)$, we have that $\Ext_{Q'/(\tilde \varphi(f))}(M,k')$ is unbounded. Thus, by the argument above, $\Ext_{Q/(f)}(M,k)$ is unbounded, so $\overline{f} \in \cV_R(M)$.

Lastly, when $f$ is not a minimal generator of $I$, $\overline f = 0$ and thus $\cV_{\varphi}(\overline f) =0$ as well.
\end{proof}

\begin{theorem}\label{t_main-theorem}
Let $\varphi\!:\! \!R \to S$ be a local homomorphism between local rings with algebraically closed residue fields. Let $M$ be an object of $D^f(S/R)$. Then, 
\[
\cV_R(M) = \cV_{\varphi}^{-1}(\cV_S(M))
\]
at the level of affine varieties.
\end{theorem}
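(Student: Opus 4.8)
The plan is to obtain the general statement by combining the two special cases already in hand — \Cref{l_restriction-weak-reg} for weakly regular maps and \Cref{l_restriction-surjections} for surjections — via a Cohen factorization of $\varphi$.

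First I would reduce to the case that $R$, $S$, and $M$ are complete. All three quantities $\cV_R(M)$, $\cV_S(M)$, and $\cV_\varphi$ are defined through the completion $\widehat\varphi\colon\widehat R\to\widehat S$ and the object $\widehat M$, so it is enough to treat $\widehat\varphi$. The one point to check is that $\widehat M$ really is an object of $D^f(\widehat S/\widehat R)$: since $M\in D^f(S/R)$, the homology $\H(M)$ is finitely generated over $R$, hence also over $S$, and one verifies that the $\m_R$-adic and $\m_S$-adic topologies on $\H(M)$ coincide; consequently $\H(\widehat M)$ is finitely generated over $\widehat R$ and the two a priori different completions of $M$ agree.

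Assume now $R$ and $S$ are complete. Because $S$ is complete, $\varphi$ admits a Cohen factorization $R\xrightarrow{\ \dot\varphi\ }T\xrightarrow{\ \varphi'\ }S$ with $\dot\varphi$ weakly regular, $\varphi'$ surjective, and $T$ complete, by the theory recalled in \Cref{s_reg-fac}. Here $M\in D^f(S)$ restricts to $D^f(T)$ along the surjection $\varphi'$, and its restriction to $R$ along $\varphi=\varphi'\dot\varphi$ is the same as first restricting to $T$ and then to $R$, which lands in $D^f(R)$; so $M$, viewed over $T$, is an object of $D^f(T/R)$. Thus \Cref{l_restriction-weak-reg} applies to $\dot\varphi$ and \Cref{l_restriction-surjections} applies to $\varphi'$, yielding
\[
\cV_R(M)=\cV_{\dot\varphi}^{-1}\big(\cV_T(M)\big),\qquad \cV_T(M)=\cV_{\varphi'}^{-1}\big(\cV_S(M)\big),
\]
and hence $\cV_R(M)=\big(\cV_{\varphi'}\circ\cV_{\dot\varphi}\big)^{-1}\big(\cV_S(M)\big)$.

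It remains to identify $\cV_{\varphi'}\circ\cV_{\dot\varphi}$ with $\cV_\varphi$, and this is where the real care is needed. I would choose the minimal Cohen presentations compatibly: fix $Q\twoheadrightarrow R$; lift the weakly regular map $\dot\varphi$ to a weakly regular map $Q\to T'$ with $T'\twoheadrightarrow T$ a minimal Cohen presentation using \Cref{c_lift-weakly-reg}; and then apply the construction in the proof of \Cref{l_restriction-surjections} to $T'\twoheadrightarrow T$ and $\varphi'$ to produce a minimal Cohen presentation $Q'\twoheadrightarrow S$ together with a surjection $T'\twoheadrightarrow Q'$ lifting $\varphi'$. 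The composite $Q\to T'\twoheadrightarrow Q'$ is a lift of $\widehat\varphi$, and the linear map it induces on $I/\m_Q I\to J/\m_{Q'}J$ is visibly the composite of those induced by $Q\to T'$ and $T'\to Q'$, so $\cV_\varphi=\cV_{\varphi'}\circ\cV_{\dot\varphi}$ for this choice; \Cref{l_compatability-Cohen-pres} then transports the equality to an arbitrary choice of lifts, completing the proof. The chief obstacle is exactly this last step: arranging the Cohen factorization of $\varphi$ to lift coherently through a factorization $Q\to T'\to Q'$ of regular local rings realizing the prescribed minimal Cohen presentations, so that the two induced linear maps compose as required; the reduction to the complete case is comparatively routine once the small topological lemma on $\H(M)$ is recorded.
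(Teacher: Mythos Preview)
Your proposal is correct and follows essentially the same route as the paper: reduce to the complete case, take a Cohen factorization $R\xrightarrow{\dot\varphi}T\xrightarrow{\varphi'}S$, apply \Cref{l_restriction-weak-reg} and \Cref{l_restriction-surjections}, and use $\cV_\varphi=\cV_{\varphi'}\circ\cV_{\dot\varphi}$. You supply more detail than the paper does on two points the paper simply asserts---that passing to completions is harmless for $M$, and that the linear map $\cV_\varphi$ factors compatibly through the Cohen factorization---but the underlying argument is the same.
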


\begin{proof}
By the definition of $\cV_R(M)$ and $\cV_S(M)$, we may assume that $R$ and $S$ are complete. Then $\varphi\!:\!R \to S$ factors as
\[
\begin{tikzcd}
&T \ar[dr, two heads,  "\varphi'"] & \\
R\ar[ur, "\dot \varphi"] \ar[rr, "\varphi"] && S
\end{tikzcd}
\]
where $\dot \varphi\!:\!R \to T$ is weakly regular and $\varphi'\!:\!T \to S$ is surjective. Since $\cV_{\varphi} = \cV_{\varphi'}\circ\cV_{\dot \varphi}$, the previous two lemmas yield the desired result.
\end{proof}

\begin{chunk}
We end this section by looking at examples and corollaries of \Cref{t_main-theorem}.
\end{chunk}

\begin{example}
Let $Q = k\llbracket x,y,z,w \rrbracket$, $R=Q/(x^2,w^2)$, and $S=Q/(x^2,xy,yz,zw,w^2)$ with $k$ an algebraically closed field. From \cite[1.4.8]{Briggs/Grifo/Pollitz:2024}, $\cV_S(S) = \mathbf V(\chi_1\chi_5)$ where $k[\chi_1,\dotsc ,\chi_5]$ is the ring of cohomological operators associated to $S$. Let $\varphi\!:\!R \to S$ be the canonical surjection. Then we can identify $\cV_{\varphi}$ with the following matrix,
\[
\cV_{\varphi} = \begin{pmatrix}
1 & 0 \\
0 & 0 \\ 
0 & 0 \\
0 & 0 \\
0 & 1 \\
\end{pmatrix}.
\]
By \Cref{t_main-theorem},
\begin{align*}
\cV_R(S) &=  \cV_{\varphi}^{-1}(\cV_S(S)) \\
&= \{(a_1,a_2)\in \mathbb A^2_k \mid (a_1,0,0,0,a_2) \in \mathbf V(\chi_1\chi_5)\} \\
&= \{(a_1,a_2)\in \mathbb A^2_k \mid a_1=0 \text{ or } a_2=0\} \\
&= \mathbf V(\chi_1'\chi_2').
\end{align*}
Where $k[\chi_1', \chi_2']$ is the ring of cohomological operators associated with $R$.
\end{example}

\begin{remark}\label{r_bergh-jorgensen}
Let $(Q,\m,k)$ be a regular local ring with algebraically closed residue field, and $I\subseteq J$ ideals in $Q$ contained in $\m^2$ such that they are minimally generated by regular sequences.  Let $R = Q/I$, $S=Q/J$, and $\varphi \colon R \to S$ be the canonical surjection. If we assume that $\cV_{\varphi}\!:\!I/\m I \to J / \m J$ is injective, then by \Cref{t_main-theorem}, for any pair of finitely generated $S$ modules, $M$ and $N$, the following equality holds
\[
\cV_{\varphi}(\cV_{R}(M,N)) = \cV_{S}(M,N) \cap W
\]
where $W = \text{Im}(\cV_{\varphi})$. This was first proved in \cite[3.1]{Bergh/Joregensen:2015}.
\end{remark}

\begin{corollary}\label{co_ker}
Let $\varphi\!:\!R \to S$ be a local homomorphism between local rings with algebraically closed residue fields. Suppose that $M$ is an element of $D^f(S/R)$. If $\cV_{S}(M) = \{0\}$, then $\cV_{R}(M) = \ker(\cV_{\varphi})$.
\end{corollary}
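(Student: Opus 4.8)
The plan is to derive this as an immediate consequence of \Cref{t_main-theorem}. Since $M$ is assumed to lie in $D^f(S/R)$, both $\cV_R(M)$ and $\cV_S(M)$ are defined, and this is precisely the hypothesis under which \Cref{t_main-theorem} applies, giving $\cV_R(M) = \cV_\varphi^{-1}(\cV_S(M))$. Substituting the assumption $\cV_S(M) = \{0\}$ then yields $\cV_R(M) = \cV_\varphi^{-1}(\{0\})$, where $\{0\}$ denotes the origin of $J/\m_{Q'}J$.

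It remains to identify the set $\cV_\varphi^{-1}(\{0\})$ with $\ker(\cV_\varphi)$. Here one recalls that $\cV_\varphi$ is, by construction, the map $I/\m_Q I \to J/\m_{Q'} J$ induced by $\tilde\varphi \colon Q \to Q'$ (which carries $I$ into $J$), so it is additive and compatible with scalars up to the residue field extension $k \hookrightarrow k'$; in particular it is a homomorphism of $k$-vector spaces once $J/\m_{Q'}J$ is regarded as a $k$-space by restriction of scalars. For any such linear map the set-theoretic preimage of the origin is exactly the kernel, and pulling back the linear forms cutting out the origin shows the equality persists scheme-theoretically. Hence $\cV_R(M) = \ker(\cV_\varphi)$; as a consistency check, both sides are cones containing the origin.

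The argument has essentially no obstacle; it is a direct corollary. The only points requiring minor care are: confirming that the membership $M \in D^f(S/R)$ is exactly the condition needed to invoke \Cref{t_main-theorem}; observing that $\cV_\varphi$ is linear (this uses that $\cV_\varphi$ arises from the ring map $\tilde\varphi$ rather than being an arbitrary morphism of varieties), so that ``preimage of the origin'' and ``kernel'' coincide; and, if the possibly distinct residue fields $k$ and $k'$ are a concern, keeping in mind that the comparison is made at the level of affine varieties over the appropriate base. No homological input beyond \Cref{t_main-theorem} enters.
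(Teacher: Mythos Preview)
Your proposal is correct and follows essentially the same approach as the paper: apply \Cref{t_main-theorem} to get $\cV_R(M) = \cV_\varphi^{-1}(\cV_S(M))$, substitute $\cV_S(M) = \{0\}$, and identify the preimage of the origin with the kernel of the linear map $\cV_\varphi$. The paper's proof is even terser, simply noting that the preimage of $\{0\}$ under $\cV_\varphi$ is ``otherwise known as the kernel of $\cV_\varphi$.''
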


\begin{proof}
By \Cref{t_main-theorem}, $\cV_R(M) \!= \!  \cV_{\varphi}^{-1}(\cV_S(M))$. Since $\cV_S(M)$ is assumed to be zero, we have that $\cV_R(M)$ is the preimage of $\{0\}$ under $\cV_{\varphi}$, otherwise known as the kernel of $\cV_{\varphi}$. 
\end{proof}

\begin{chunk}
The previous corollary recovers a result from \cite[3.6]{Briggs/Grifo/Pollitz:2022} which states the result for a more restrictive class of morphisms and only for $M = S$. Note that in general, it is unclear when an object in $D^f(S)$ has $\{0\}$ as its cohomological support variety (over $S$). However, by \cite{Pollitz:2021}, we must have that $S$ is a complete intersection if $\{0\}$ can be realized as the cohomological support variety of a finitely generated module. Moreover, by \cite[2.8]{Briggs/Grifo/Pollitz:2024} we know that if $M \in D^f(S)$ is such that $V_S(M) = \{0\}$ then $S$ can not be Cohen-Macaulay unless it is a complete intersection.
\end{chunk}

\begin{corollary}\label{co_full-dim}
Let $\varphi\!:\!R \to S$ be a local homomorphism between local rings with algebraically closed residue fields. Let $(Q,\m,k)$ and $(Q',\m',k')$ be regular local rings such that $\widehat R \cong Q/I$ and $\widehat S \cong Q'/J$ are minimal Cohen presentations. Let $\tilde \varphi\!:\! Q \to Q'$ be a lift of $\varphi$ and suppose that $\tilde\varphi(I) \subseteq \m' J$. Then for any $M$ an object of $D^f(S/R)$, we have $\cV_R(M) = I / \m I$.
\end{corollary}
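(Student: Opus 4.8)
The plan is to obtain this as an immediate consequence of \Cref{t_main-theorem}, the only point being to observe that the hypothesis $\tilde\varphi(I)\subseteq\m'J$ forces the induced linear map $\cV_\varphi$ to be identically zero, after which the preimage formula does the rest.

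First I would recall what $\cV_\varphi\colon I/\m I\to J/\m'J$ actually is: by construction it is the $k$-linear map induced by the lift $\tilde\varphi\colon Q\to Q'$, sending the class of an element $f\in I$ to the class of $\tilde\varphi(f)$ in $J/\m'J$ (that $\tilde\varphi(I)\subseteq J$ is automatic from commutativity of the square relating $Q,Q',\widehat R,\widehat S$, so this map is well defined). The assumption $\tilde\varphi(I)\subseteq\m'J$ says exactly that $\tilde\varphi(f)\in\m'J$ for every $f\in I$, so every such class vanishes; in other words $\cV_\varphi=0$. Here one also invokes the convention fixed after \Cref{l_compatability-Cohen-pres}: since the variety $\cV_\varphi^{-1}(\cV_S(M))$ does not depend on the choice of lift, we are free to compute with the particular $\tilde\varphi$ appearing in the statement.

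Then I would apply \Cref{t_main-theorem} to $M\in D^f(S/R)$, which gives $\cV_R(M)=\cV_\varphi^{-1}(\cV_S(M))$ at the level of affine varieties. Every cohomological support variety contains the origin --- this is explicit in the description of $\cV_E(-)$ recalled in \Cref{c_intermediate-hyperplane} --- so $0\in\cV_S(M)$; and since $\cV_\varphi$ is the zero map, $\cV_\varphi^{-1}$ of any subset containing $0$ is all of $I/\m I$. Combining this with the trivial inclusion $\cV_R(M)\subseteq I/\m I$ coming from the definition yields $\cV_R(M)=I/\m I$. There is essentially no obstacle: the entire content is the bookkeeping identification of $\cV_\varphi$ as the map on minimal generators induced by $\tilde\varphi$, together with the fact that conicality puts the origin in $\cV_S(M)$; the only mild care needed is the compatibility-of-lifts point, which is exactly what \Cref{l_compatability-Cohen-pres} is for.
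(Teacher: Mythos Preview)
Your proposal is correct and follows essentially the same approach as the paper: observe that $\tilde\varphi(I)\subseteq\m'J$ makes $\cV_\varphi$ the zero map, apply \Cref{t_main-theorem}, and use that $0\in\cV_S(M)$ to conclude the preimage is all of $I/\m I$. Your write-up simply adds a bit more explicit justification (the reference to \Cref{c_intermediate-hyperplane} for the origin and to \Cref{l_compatability-Cohen-pres} for independence of the lift) than the paper's terse version.
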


\begin{proof}
In this specific scenario, the map $\cV_{\varphi}:I / \m I \to \! J / \m' J$ is the zero map. By \Cref{t_main-theorem}, $\cV_R(M) = \cV_{\varphi}^{-1}(\cV_S(M))$. By definition, $0 \in \cV_S(M)$, so the preimage of $\cV_S(M)$ under $\cV_{\varphi}$ is all of $I/\m I$. 
\end{proof}

\begin{remark}
When $Q = Q'$, this result can also be recovered from a paper of Pollitz and \cb{S}ega \cite[2.2]{Pollitz/Sega:2025}. Let $E$ be the Koszul complex associated to the quotient $Q/I$. The results in their paper relate the Poincar\'{e} series of $M$ over $E$ to the Poincar\'{e} series of $M$ over $Q$. Since the dimension of $\cV_R(M)$ gives the complexity of $M$ over $E$ \cite{Pollitz:2021}, their result implies that the dimension of $\cV_R(M)$ must be maximal. In fact, they show something even stronger. They show that $\Ext_E(M,k)$ is free over the ring of cohomological operators \cite[2.6]{Pollitz/Sega:2025} which implies the statement about the cohomological support varieties.
\end{remark}

\section{Finite Flat Dimension}\label{s_ffd}
In this section, we look at restriction of scalars along maps of finite projective dimension. In particular, we prove a dimension inequality on support varieties along a local map of finite flat dimension. Using this, we recover Avramov's result that the complete intersection property localizes \cite{Avramov:1977}.

\begin{chunk}\label{c_homotopy-lie-alg}
Let $(R,\m_R,k)$ and $(S,\m_S, k')$ be complete local rings, and $\varphi\!:\!R \to S$ a local homomorphism of finite flat dimension. Consider a Cohen factorization
\[
\begin{tikzcd}
&T\ar[dr, two heads, "\varphi'"]&\\
R\ar[ur, "\dot \varphi"] \ar[rr, "\varphi"]&&S .
\end{tikzcd}
\]
Since $\dot \varphi$ is weakly regular and $\varphi$ has finite flat dimension, by \cite[3.5]{Avramov/Foxby/Halperin:1985} we have
\[
\pi_2(R) \otimes_kk' \cong \pi_2(T) \hookrightarrow\pi_2(S),
\]
where $\pi_*(-)$ denotes the homotopy lie coalgebra. A deep understanding of $\pi_*(-)$ is not needed for this paper, but for the interested reader we refer them to \cite[5.4]{Avramov/Buchweitz:2000b}, \cite[Chapter 10]{Avramov:2010}, and \cite[1.7]{Briggs/Grifo/Pollitz:2024} for more information on the homotopy lie (co)algebra and its interaction with cohomological support varieties. The two pieces of information regarding the homotopy lie coalgebra that are relevant to this paper are
\begin{enumerate}
\item $\pi_2(R) \cong I/\m_Q I$ as $k$-vector spaces, where $Q/I \cong R$ is a minimal Cohen presentation. Thus, we can view $\cV_R(M)$ inside of $\pi_2(R)$ for any local ring $R$ and any $M$ in $D^f(R)$.
\item The map $\pi_2(T)\hookrightarrow \pi_2(S)$ is the map $\cV_{ \varphi'}$ when identifying $\pi_2(T)$ and $\pi_2(S)$ with the source and target of $\cV_{\varphi '}$ respectively. In particular, $\cV_{\varphi '}$ is injective.
\end{enumerate}
Note that these facts can also be used to prove the naturality of the map $\cV_{\varphi}$ with respect to the choice of Cohen presentations of $R$ and $S$ as in \Cref{l_compatability-Cohen-pres}.
\end{chunk}

\begin{theorem}\label{t_finite-flat-dim}
Let $(R,\m_R,k)$ and $(S,\m_S,k')$ be local rings and $\varphi\! : \! R \to S$ a local homomorphism of finite flat dimension. Then
\[
\dim \cV_R(R) \les \dim \cV_S(S).
\]
\end{theorem}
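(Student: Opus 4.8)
The plan is to transfer the inequality onto a surjective Cohen factor of $\varphi$ and there combine \Cref{t_main-theorem} with a change-of-rings computation. Since $\cV_R(R)$ and $\cV_S(S)$ are defined through completions and completion preserves finite flat dimension, I would first assume $R$ and $S$ are complete; then, replacing $R$ and $S$ by compatible residual algebraic closures (see \Cref{c_residual-algebraic-closure}) --- these are weakly regular, leave $\dim\cV_R(R)$ and $\dim\cV_S(S)$ unchanged by \Cref{r_dim-equality}, and keep $\varphi$ of finite flat dimension --- I may assume the residue fields are algebraically closed. Choosing a Cohen factorization $R\xra{\dot\varphi}T\xra{\varphi'}S$ with $\dot\varphi$ weakly regular and $\varphi'$ surjective (so $T$ too has algebraically closed residue field, namely that of $S$), \Cref{r_dim-equality} applied to $\dot\varphi$ with the module $R$ gives $\dim\cV_R(R)=\dim\cV_T(T)$; so it suffices to prove $\dim\cV_T(T)\le\dim\cV_S(S)$.

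I would then extract two consequences of $\varphi$ having finite flat dimension. First, $\varphi'$ has finite projective dimension: as $\dot\varphi$ is flat, $\Tor^R_i(S,k)\cong\Tor^T_i(S,T/\m_RT)$ vanishes for $i\gg0$, and since the closed fibre $T/\m_RT$ is regular, $k'$ has a finite free resolution over it; feeding this into a spectral sequence forces $\Tor^T_i(S,k')=0$ for $i\gg0$, i.e.\ $\operatorname{pd}_TS<\infty$ (using that $S$ is finitely generated over $T$). Second, by \Cref{c_homotopy-lie-alg} the linear map $\cV_{\varphi'}\colon\pi_2(T)\to\pi_2(S)$ is injective.

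The heart of the argument is two comparisons. On one hand, $S$ is an object of $D^f(S/T)$ (finitely generated over $T$ along $\varphi'$; see \Cref{d_restrict-fullsubcat}), so \Cref{t_main-theorem} gives $\cV_T(S)=\cV_{\varphi'}^{-1}(\cV_S(S))$; since $\cV_{\varphi'}$ is an injective linear map it carries $\cV_T(S)$ isomorphically onto $\cV_S(S)\cap\bigl(\text{image of }\cV_{\varphi'}\bigr)$, whence $\dim\cV_T(S)\le\dim\cV_S(S)$. On the other hand, I claim $\cV_T(T)\subseteq\cV_T(S)$. Fixing a minimal Cohen presentation $T\cong Q_T/I$, \Cref{c_intermediate-hyperplane} says a minimal generator $f$ of $I$ satisfies $\overline f\in\cV_T(T)$ exactly when $\operatorname{pd}_{Q_T/(f)}T=\infty$ and $\overline f\in\cV_T(S)$ exactly when $\operatorname{pd}_{Q_T/(f)}S=\infty$; since $Q_T/(f)\twoheadrightarrow T$ is surjective and $\operatorname{pd}_TS<\infty$, the change-of-rings equality $\operatorname{pd}_{Q_T/(f)}S=\operatorname{pd}_{Q_T/(f)}T+\operatorname{pd}_TS$ holds, so $\operatorname{pd}_{Q_T/(f)}T=\infty$ forces $\operatorname{pd}_{Q_T/(f)}S=\infty$ (the case $\overline f=0$ being trivial). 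Concatenating, $\dim\cV_R(R)=\dim\cV_T(T)\le\dim\cV_T(S)\le\dim\cV_S(S)$.

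I expect the delicate point to be the inclusion $\cV_T(T)\subseteq\cV_T(S)$. It runs against intuition, since $S$ has \emph{finite} projective dimension over $T$ and one might therefore expect a smaller variety; indeed the easy half of change of rings only yields $\cV_T(S)\subseteq\cV_T(T)$. The inclusion we actually need rests on the nontrivial ``$\ge$'' half of the additivity $\operatorname{pd}_{Q_T/(f)}S=\operatorname{pd}_{Q_T/(f)}T+\operatorname{pd}_TS$, proved by running the spectral sequence of a minimal $T$-free resolution of $S$ and checking that the top ``corner'' term $\Tor^{Q_T/(f)}_{\ast}(T,k')$ survives --- this uses minimality of the resolution and that the closed fibre of the surjection $Q_T/(f)\to T$ is a field. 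A secondary point requiring care is arranging compatible residual algebraic closures of $R$ and $S$ so that the reduction to algebraically closed residue fields is legitimate.
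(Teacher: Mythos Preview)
Your strategy mirrors the paper's: complete, take a Cohen factorization $R\to T\to S$, use $\dim\cV_R(R)=\dim\cV_T(T)$ along the weakly regular leg, use \Cref{t_main-theorem} and injectivity of $\cV_{\varphi'}$ to get $\dim\cV_T(S)\le\dim\cV_S(S)$, and then compare $\cV_T(T)$ with $\cV_T(S)$. The gap is in this last comparison.

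You correctly isolate the inclusion $\cV_T(T)\subseteq\cV_T(S)$ as the delicate point and try to derive it from the additivity $\operatorname{pd}_{Q_T/(f)}S=\operatorname{pd}_{Q_T/(f)}T+\operatorname{pd}_T S$. Your spectral sequence has $E_2^{p,q}\cong\Tor^{Q_T/(f)}_q(T,k')^{\beta_p^T(S)}$, and minimality of the $T$-resolution of $S$ together with the fact that each $\Tor^{Q_T/(f)}_q(T,k')$ is a $k'$-vector space does force $d_1=0$, so $E_2=E_1$. But this says nothing about $d_r$ for $r\ge 2$. When $\operatorname{pd}_{Q_T/(f)}T=t<\infty$ there is a genuine corner at $(s,t)$ (with $s=\operatorname{pd}_T S$): all differentials in and out vanish for degree reasons, the corner survives, and additivity follows. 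In the case you actually need, $\operatorname{pd}_{Q_T/(f)}T=\infty$, there is no corner in the $q$-direction: every $E_2^{s,q}$ admits outgoing differentials $d_r\colon E_r^{s,q}\to E_r^{s-r,q+r-1}$ with nonzero target, and nothing you have written prevents these from being injective and annihilating $E_\infty^{s,q}$ for all large $q$. The ``closed fibre is a field'' observation is just the statement that $\Tor^{Q_T/(f)}_q(T,k')$ is a $k'$-space, which you have already used to kill $d_1$; it gives no control over the higher differentials. So the infinite half of the additivity formula---precisely the half you need---does not follow from this spectral sequence alone.

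The paper sidesteps this by invoking the \emph{equality} $\cV_T(S)=\cV_T(T)$ for any nonzero finite $T$-module $S$ of finite projective dimension, citing \cite[3.3.2]{Pollitz:2019} and \cite[2.3]{Briggs/Grifo/Pollitz:2022}; those results are proved through the support-variety/thick-subcategory machinery over the Koszul complex, not by a bare change-of-rings computation. A secondary difference worth noting: the paper takes the Cohen factorization \emph{first} and only then passes to residual algebraic closures of $T$ and $S$, which share the residue field $k'$; this is cleaner than your proposed construction of compatible residual algebraic closures of $R$ and $S$, whose residue fields differ.
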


\begin{proof}
We first note that we can reduce to the case where both $R$ and $S$ are complete. Note that $R \to \widehat {S}$ is a local map of finite flat dimension as $S \to \widehat S$ is flat. Thus $\widehat R \to \widehat S$ has finite flat dimension by \cite[2.7]{Avramov/Foxby/Halperin:1985}. Since $\cV_R(M) = \cV_{\widehat{R}}(\widehat M)$ for all $M \in D^f(R)$, we can take $R$ and $S$ to be complete. \newline

Consider the setup in \Cref{c_homotopy-lie-alg}. By \cite[App., Theor\'eme 1]{Bourbaki:1983}, there exist residual algebraic closures, $\tilde S$ and $\tilde T$ of $S$ and $T$ respectively such that the following diagram commutes:
\[
\begin{tikzcd}
&&\tilde T\ar[dr, two heads] && \\
&T\ar[ur] \ar[dr, two heads]&&\tilde S\\
R\ar[rr] \ar[ur]&&S \ar[ur]& \ \ .&
\end{tikzcd}
\]
Since $R \to T$ and $T \to \tilde T$ are weakly regular so is the map $R \to \tilde T$ \cite[4.4]{Avramov/Foxby:1998}. Moreover, $R \to \tilde S$ is of finite flat dimension since $R \to S$ is of finite flat dimension and $S \to \tilde {S}$ is flat. By \Cref{r_dim-equality}, $\dim \cV_{\tilde S}(\tilde{S}) = \dim \cV_S(S)$, so we may assume we have the following Cohen factorization of $\varphi$,
\[
\begin{tikzcd}
&T\ar[dr, two heads, "\varphi'"]& \\
R \ar[ur, "\dot \varphi"] \ar[rr, "\varphi"]&& S 
\end{tikzcd}
\]
where $T$ and $S$ have the same algebraically closed residue field. 

By \Cref{l_restriction-surjections}, $\cV_T(S) = \cV_{\varphi'}^{-1}(\cV_S(S))$. Thus, $\dim \cV_T(S) \les \dim \cV_S(S)$ since $\cV_{\varphi'}$ is an injective, linear map. Now, $S$ has finite flat dimension over $R$, and thus has finite flat dimension over $T$ \cite[2.7]{Avramov/Foxby/Halperin:1985}. Since $S$ is a finitely generated $T$-module, this shows that $S$ has finite projective dimension over $T$ as $T$ is noetherian. Hence, $\cV_T(S) = \cV_T(T)$ by \cite[3.3.2]{Pollitz:2019} and \cite[2.3]{Briggs/Grifo/Pollitz:2022}. If $k$ is the residue field of $R$ and $k'$ is the residue field of $T$, by \Cref{r_fiber-product} the following isomorphism holds, $\cV_T(T) \cong \cV_R(R)\times_{\spec k} \spec k'$, and thus $\dim \cV_R(R) = \dim \cV_T(T)$ by \Cref{r_dim-equality}. Putting this all together,
\[
\dim \cV_R(R) = \dim \cV_T(T) = \dim \cV_T(S) \les \dim \cV_S(S). \qedhere
\]
\end{proof}

\begin{remark}
In light of \cite[3.3.2 (3)]{Pollitz:2019}, one may think of the dimension of the cohomological support variety of a ring as a measure of how singular the ring is. Thus, the previous theorem falls in line with other results that suggest that the singularity of a ring can only get worse along a map of finite flat dimension.
\end{remark}

\begin{corollary}\label{co_localization-ci}
Let $(R,\m,k)$ be a local ring. If $R$ is a complete intersection, then for every prime ideal $\p \in \Spec R$, the localization $R_\p$ is a complete intersection.
\end{corollary}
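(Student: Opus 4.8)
The plan is to deduce this from \Cref{t_finite-flat-dim} together with Pollitz's theorem \cite[3.3.2]{Pollitz:2019} that a local ring $A$ is a complete intersection if and only if $\cV_A(A)=\{0\}$; since $\cV_A(A)$ is a cone, this is the same as $\dim\cV_A(A)=0$. Thus, fixing a complete intersection $R$ and a prime $\p\in\Spec R$, it suffices to prove $\dim\cV_{R_\p}(R_\p)\les\dim\cV_R(R)$, the right-hand side being $0$.

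The only real difficulty is that the localization map $R\to R_\p$ is flat but \emph{not} local when $\p\neq\m$, so \Cref{t_finite-flat-dim} does not apply to it directly. I would remedy this by passing through the completion. Because $R\to\widehat R$ is faithfully flat, there is a prime $\q\in\Spec\widehat R$ with $\q\cap R=\p$. The induced map $R_\p\to\widehat R_\q$ is then flat --- it is obtained from $R\to\widehat R$ by inverting $R\setminus\p$ and then localizing at $\q$ --- and it is local because $\p=\q\cap R\subseteq\q$. In particular it has finite (indeed zero) flat dimension, so \Cref{t_finite-flat-dim} yields
\[
\dim\cV_{R_\p}(R_\p)\les\dim\cV_{\widehat R_\q}(\widehat R_\q).
\]
It therefore remains to check that $\widehat R_\q$ is a complete intersection. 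As $R$ is a complete intersection, so is $\widehat R$, hence we may write $\widehat R\cong Q/(f_1,\dots,f_c)$ with $Q$ a complete regular local ring and $f_1,\dots,f_c$ a regular sequence. Letting $\q'\subseteq Q$ be the preimage of $\q$ under $Q\twoheadrightarrow\widehat R$, we get $\widehat R_\q\cong Q_{\q'}/(f_1,\dots,f_c)Q_{\q'}$. Now $Q_{\q'}$ is again a regular local ring (a localization of a regular local ring is regular --- this is Serre's theorem, resting on the homological characterization of regularity, and it is entirely independent of the complete intersection theory, so there is no circularity), and since $Q\to Q_{\q'}$ is flat with $(f_1,\dots,f_c)\subseteq\q' Q_{\q'}\neq Q_{\q'}$, the images of $f_1,\dots,f_c$ still form a regular sequence in $Q_{\q'}$. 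Hence $\widehat R_\q$ is a quotient of a regular local ring by a regular sequence, i.e.\ a complete intersection, so $\dim\cV_{\widehat R_\q}(\widehat R_\q)=0$ and therefore $\dim\cV_{R_\p}(R_\p)=0$, as wanted.

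I expect the one delicate step to be the opening move: recognizing that $R\to R_\p$ is not local and that one must substitute the local map $R_\p\to\widehat R_\q$ for a prime $\q$ lying over $\p$. After that, each ingredient is either a citation (Pollitz's characterization, Serre's theorem, faithful flatness of completion) or a direct verification. One should also confirm that $\cV_{R_\p}(R_\p)$ and $\cV_{\widehat R_\q}(\widehat R_\q)$ are being measured correctly, but \Cref{l_compatability-Cohen-pres} and the definitions in \Cref{c_SV-rings} show that the vanishing $\dim\cV_A(A)=0$ does not depend on the chosen minimal Cohen presentation, so this causes no trouble. Finally, I note that the sharper inequality $\dim\cV_{R_\p}(R_\p)\les\dim\cV_R(R)$ for an arbitrary local ring $R$ can be obtained along the same lines, replacing the explicit-presentation argument for $\widehat R_\q$ by a flat base change of the relevant $\Ext$-modules along $Q\to Q_{\q'}$ --- which realizes $\cV_{\widehat R_\q}(\widehat R_\q)$ and $\cV_R(R)$ as the fibers of $\V_E(\widehat R)\to\Spec Q$ over $\q'$ and over the closed point --- together with upper semicontinuity of fiber dimension, since the closed point specializes $\q'$.
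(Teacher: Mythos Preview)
Your proof is correct and shares the paper's decisive opening move: since $R\to R_\p$ is not local, you pick a prime $\q$ of $\widehat R$ lying over $\p$ and apply \Cref{t_finite-flat-dim} to the flat local map $R_\p\to\widehat R_\q$. The divergence comes in bounding $\dim\cV_{\widehat R_\q}(\widehat R_\q)$. You argue directly that $\widehat R_\q$ is a complete intersection by localizing an explicit presentation $\widehat R\cong Q/(f_1,\dots,f_c)$, invoking Serre's theorem for the regularity of $Q_{\q'}$ and flatness for the persistence of the regular sequence; this is self-contained and entirely sufficient for the corollary as stated. The paper instead proves the general inequality $\dim\cV_{\widehat R_\q}(\widehat R_\q)\les\dim\cV_{\widehat R}(\widehat R)$ for an \emph{arbitrary} local ring, by observing that a semifree $E$-resolution of $\widehat R$ localizes to a semifree $E_{\q'}$-resolution of $\widehat R_\q$, so complexity can only drop (\cite[5.2.9]{Pollitz:2021} together with \Cref{c_dim-equality-cohen}). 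That route buys the sharper conclusion $\dim\cV_{R_\p}(R_\p)\les\dim\cV_R(R)$ without any complete intersection hypothesis, which you only sketch in your final paragraph via semicontinuity; the paper's complexity argument is more direct than your semicontinuity outline, since it avoids having to identify the fibers of $\V_E(\widehat R)\to\Spec Q$ with the relevant $\cV$-varieties.
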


\begin{proof}
Consider the morphism $R_\p \to \widehat{R}_\q$ where $\q$ is some prime ideal in $\widehat R$ lying over $\p$. By \Cref{t_finite-flat-dim}, $\dim \cV_{R_\p}(R_\p) \les \dim \cV_{\widehat {R}_\q}(\widehat{R}_\q)$. By \Cref{c_dim-equality-cohen} and \cite[5.2.9]{Pollitz:2021}, we have the inequality $\dim \cV_{\widehat{R}_\q}(\widehat R_\q) \les \dim \cV_{\widehat R} \widehat R$. Indeed, if $Q \to \widehat{R}$ is a Cohen presentation, so is $Q_{\mathfrak r} \to \widehat R_  \q$ where $\mathfrak r$ is some prime lying over $\q$. Thus, if $E = \Kos^Q(f_1,\dotsc, f_n)$ is a Koszul complex corresponding to the quotient $Q \to \widehat R$, then $E_{\mathfrak r} := E \otimes _Q Q_\mathfrak r$ is a Koszul complex corresponding to the quotient $Q_{\mathfrak r} \to \widehat R_\q$. Hence, since $Q \to Q_{\mathfrak r}$ is flat, any semi-free $E$-resolution of $\widehat R$ will yield a semi-free $E_{\mathfrak r}$ resolution of $\widehat R_\q$ once tensored with $Q_{\mathfrak r}$. Thus the complexity (see \cite[5.2.8]{Pollitz:2021}) of $\widehat R$ over $E$ is at least that of $\widehat R_\q$ over $E_{\mathfrak r}$. Hence, $\dim \cV_{\widehat R_\p}(\widehat R_\p) \les \dim \cV_{\widehat R}(\widehat R)$, so 
\[
\dim \cV_{R_\p}(R_\p) \les \dim \cV_{\widehat {R}_\q}(\widehat{R}_\q) \les \dim \cV_{\widehat R}(\widehat R) = \dim \cV_R(R).
\]
By \cite[3.3.2]{Pollitz:2019} we are done.
\end{proof}

\begin{corollary}
Let $R \to S$ be a local map of finite flat dimension. If $S$ is a complete intersection, then $R$ is a complete intersection as well. 
\end{corollary}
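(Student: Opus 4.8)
The plan is to deduce this immediately from \Cref{t_finite-flat-dim} together with Pollitz's characterization of the complete intersection property in terms of the cohomological support variety of the ring, namely that a local ring $A$ is a complete intersection if and only if $\cV_A(A) = \{0\}$ \cite[3.3.2]{Pollitz:2019}. Since $\cV_A(A)$ is always a nonempty conical affine variety (it contains the origin), the condition $\cV_A(A) = \{0\}$ is equivalent to $\dim \cV_A(A) = 0$.

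First I would observe that, because $S$ is a complete intersection, \cite[3.3.2]{Pollitz:2019} gives $\cV_S(S) = \{0\}$, and hence $\dim \cV_S(S) = 0$. Next I would apply \Cref{t_finite-flat-dim} to the local map $\varphi\!:\!R \to S$ of finite flat dimension to obtain
\[
\dim \cV_R(R) \les \dim \cV_S(S) = 0.
\]
Therefore $\dim \cV_R(R) = 0$, and since $\cV_R(R)$ is a conical variety containing the origin, this forces $\cV_R(R) = \{0\}$. Applying \cite[3.3.2]{Pollitz:2019} once more, now in the reverse direction, yields that $R$ is a complete intersection.

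There is no genuine obstacle here; the statement is a formal consequence of \Cref{t_finite-flat-dim} and the dictionary between complete intersections and vanishing of $\cV_{(-)}(-)$. The only point meriting a word is the reduction from $\dim \cV_R(R) = 0$ to $\cV_R(R) = \{0\}$, which is immediate from conicality. (One could also phrase the whole argument directly in terms of $\dim$, bypassing the identification $\cV_R(R)=\{0\}$, since \cite[3.3.2]{Pollitz:2019} is often stated as: $R$ is a complete intersection iff $\dim\cV_R(R)=0$.)
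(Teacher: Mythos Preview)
Your argument is correct and matches the paper's proof essentially verbatim: the paper simply says the corollary is immediate from \Cref{t_finite-flat-dim} together with \cite[3.3.2]{Pollitz:2019}. Your added remark about conicality reducing $\dim \cV_R(R)=0$ to $\cV_R(R)=\{0\}$ is a harmless elaboration of the same reasoning.
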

\begin{proof}
This is immediate by \Cref{t_finite-flat-dim} and \cite[3.3.2]{Pollitz:2019}. 
\end{proof}

\begin{chunk}
The previous corollary was also proved by Avramov using Andr\'{e}-Quillen homology \cite[5.10]{Avramov:1999}.
\end{chunk}

\bibliographystyle{alpha}
\bibliography{references}

\end{document}